\newtheorem{thm}{Theorem}[section]
\newtheorem{cor}[thm]{Corollary}
\newtheorem{lem}[thm]{Lemma}
\newtheorem{prop}[thm]{Proposition}
\theoremstyle{definition}
\newtheorem{rem}[thm]{Remark}
\numberwithin{equation}{section}
\newcommand{\N}{\mathbb{N}} 
\newcommand{\Z}{\mathbb{Z}} 
\newcommand{\R}{\mathbb{R}} 
\newcommand{\abs}[1]{\lvert#1\rvert}
\newcommand{\norm}[1]{\lVert#1\rVert}
\newcommand{\ldens}{\operatorname{\underline{dens}}}
\def\llbracket{[\![}
\def\rrbracket{]\!]}
\def\mfrac#1#2{{#1/#2}}
\def\comment#1{}
\def\hods{\ \,}
\def\squ{\square}
\def\loo#1 #2\par{\looseness#1{\spaceskip.33em plus.22em minus.17em#2\par}\par}
\newcommand{\wc}[1]{\setbox0\hbox{\indent#1\enspace}\leftmargini\wd0}
\newenvironment{aitemize}[1]{\begin{list}{}%
{\def\makelabel##1{{\rlap{##1}}\hss}%
\setlength{\topsep}{4pt}%
\setlength{\parsep}{0pt}%
\setlength{\itemsep}{0pt}%
\settowidth{\labelwidth}{#1}%
\setlength{\leftmargin}{15pt}%
\addtolength{\leftmargin}{\labelwidth}%
\addtolength{\leftmargin}{.5em}%
}}
{\end{list}}
\begin{document}

\comment{
\msc{Primary 47A16; Secondary 47D06.}
\keywords{frequent hypercyclicity, weighted pseudo-shifts, translation semigroups.}
\inicaut1{E. M.}
\imaut1{Elisabetta M.}
\nazaut1{Mangino}
\adraut1{Dipartimento di Matematica e Fisica ``Ennio De Giorgi''\cr
Universit\`a del Salento\cr
 I-73100 Lecce, Italy}
\emailaut1{elisabetta.mangino@unisalento.it}
\inicaut2{M.}
\imaut2{Marina}
\nazaut2{Murillo Arcila}
\adraut2{IUMPA\cr
Universitat Polit\`{e}cnica de Val\`{e}ncia\cr
Edifici 8G, Cam\'{\i} Vera S/N\cr
E-46022 Val\`{e}ncia, Spain}
\emailaut2{mamuar1@posgrado.upv.es}
\abbrevtitle{Frequently hypercyclic translation semigroups}
\dedykacja{Dedicated to Jos\'e Bonet on the occasion of his 60th birthday}
\title{Frequently hypercyclic translation semigroups}
\tyt{\im1\ \naz1\ {\rm (Lecce)\ and}\cr \im2\ \naz2}{Val\`{e}ncia}
\ab
}


\baselineskip=17pt


\title{Frequently hypercyclic translation semigroups}

\author{Elisabetta M. Mangino\\ 
Dipartimento di Matematica e Fisica  "Ennio De Giorgi"\\
Universit\`a del Salento\\
 I-73100 Lecce, Italy\\
E-mail: elisabetta.mangino@unisalento.it
\and 
Marina Murillo Arcila\\
IUMPA, Universitat Polit\`{e}cnica de Val\`{e}ncia \\
Edifici 8G, Cam\'{\i} Vera S/N\\
E-46022 Val\`{e}ncia, Spain\\
E-mail: mamuar1@posgrado.upv.es}

\date{{\emph{Dedicated to Jos\'e Bonet on the occasion of his 60th birthday}}}

\maketitle


\renewcommand{\thefootnote}{}

\footnote{2010 \emph{Mathematics Subject Classification}: Primary 47A16; Secondary 47D06.}

\footnote{\emph{Key words and phrases}: frequent hypercyclicity, weighted pseudo-shifts, translation semigroups.}

\renewcommand{\thefootnote}{\arabic{footnote}}
\setcounter{footnote}{0}


\begin{abstract}
Frequent hypercyclicity for translation $C_0$-semigroups on weighted
spaces of continuous functions is studied. The results are achieved by
establishing an analogy between frequent hypercyclicity for
translation semigroups and for weighted pseudo-shifts and by
characterizing frequently hypercyclic weighted pseudo-shifts on spaces
of vanishing sequences. Frequently hypercyclic translation semigroups
on weighted $L^p$-spaces are also characterized.
\end{abstract}

\section{Introduction and preliminaries}

A continuous linear operator $T$ on a separable Banach space $X$ is
called \emph{hypercyclic} if there is an element $x\in X$, called
a \emph{hypercyclic vector}, such that the orbit $\{T^n x \, :\, n\in\N\}$ is
dense in $X$. The first historically known examples of hypercyclic
operators are due to Birkhoff, MacLane and Rolewicz. In particular,
the last author studied hypercyclicity  of weighted
shift operators on $l^p$ and $c_0$. The interest in the study of
linear dynamics of shift operators is nowadays still alive, since many
classical operators (e.g. derivative operators in spaces of entire
functions) can be viewed as such operators. We refer to the recent
monographs \cite{bayart_matheron2009dynamics} and
\cite{grosse-erdmann_peris2011linear} for a complete overview on the
subject.

In 2005, motivated by Birkhoff's ergodic theorem, Bayart and Grivaux \cite{BaGr06}
introduced the notion of frequently hypercyclic
operators, trying to quantify how ``often'' an orbit meets non-empty
open sets. More precisely, if the \emph{lower density} of a set $A\subseteq
\N$ is defined as
\[
\ldens (A):=\liminf_{N\rightarrow \infty} \#\{n\leq N:n\in A\}/N,
\]
an operator $T\in L(X)$ is said to be \emph{frequently
hypercyclic} if there exists $x\in X$ (called a \emph{frequently hypercyclic vector}) such that,
for every non-empty open subset $U\subseteq X$,\vspace{-4pt}
$$
\ldens(\{n\in\mathbb{N}:T^nx\in U\})>0.
$$ 
This notion has been deeply investigated by various authors: see
e.g. \cite{Gri06, bonilla_grosse-erdmann2007frequently, DFGP12}. In
particular frequently hypercyclic weighted shifts have been
investigated in \cite{BayartGrivaux07invariant,
  bonilla_grosse-erdmann2007frequently}; their behaviour in $l^p$ and $c_0$ has
been completely characterized by Bayart and Ruzsa~\cite{bayartruzsa}.

In parallel with the theory for linear operators, since the seminal
paper by Desch, Schappacher and Webb \cite{deschschappacher97}, many
researchers turned their attention to the hypercyclic behaviour of
strongly continuous semigroups. Actually hypercyclicity appears in
solution semigroups of evolution problems associated with ``birth and
death'' equations for cell populations, transport equations, first
order partial differential equations, Black--Scholes equation, and
diffusion operators like Ornstein--Uhlenbeck operators
\cite{aroza2014,banasiak_lachowicz_moszynski2006,banasiak_lachowicz_moszynski2007,banasiak_moszynski2011,
  bayart_bermudez, bermudez_et_alt, conejero_mangino,
  conejero_peris_dcds_09, emamirad, kalmes, matsuitakeo}.

We recall that, if $X$ is a separable infinite-dimensional Banach
space, a $C_0$-semigroup $(T_t)_{t\geq 0}$ of continuous linear
operators on $X$ is said to be \emph{hypercyclic} if there exists
$x\in X$ (called a \emph{hypercyclic vector} for the semigroup) such that the
set $\{T_tx\,:\,t\geq 0\}$ is dense in~$X$.  An element $x\in X$ is
said to be a \emph{periodic point} for the semigroup if there exists
$t>0$ such that $T_tx=x$.  A semigroup $(T_t)_{t\geq 0}$ is called
\emph{chaotic} if it is hypercyclic and the set of periodic points is
dense in~$X$.

The role of a ``test'' class, which is played by weighted shifts in the
setting of discrete linear dynamical systems, is taken over by
translation semigroups in the setting of continuous linear dynamical
systems.

Let $I=\R$ or $I=[0,\infty\mathclose{[}$.
An \emph{admissible weight function} on $I$ is a 
measurable function $\rho : I\rightarrow \mathopen{]}0,\infty\mathclose{[}$ for which
there exist constants $M \geq 1$, $\omega \in \R$ such that
$\rho(\tau)\leq Me^{\omega t} \rho(\tau+t)$ for all
$\tau\in I$ and  $t > 0$.

If $\rho$ is an admissible weight function, then for every $l>0$ there exist $A, B>0$
such that for all  $\sigma\in I$ and $t\in [\sigma, \sigma + l]$, 
\begin{equation}\label{admissibility}
A\rho(\sigma) \leq \rho(t)\leq B\rho(\sigma +l).
\end{equation}
For any $1\leq p<\infty$, consider the following function spaces:\vspace{-2pt}
\[
L_p^\rho(I)=\{ u: I\rightarrow \R\mid u \mbox{ is measurable and } \norm{ u}^\rho_p<\infty\},\vspace{-2pt}
\]
where $\norm{ u}^\rho_p=(\int_{I}\abs{u(t)}^p\rho(t) \,dt)^{\mfrac 1 p}$, and\vspace{-2pt}
\[
C_0^\rho(I)=\Bigl\{u:I \rightarrow \R \Bigm{|}
u \mbox{ is continuous and } \lim_{x\to\pm \infty}u(x)\rho(x)=0\Bigr\},\vspace{-2pt}
\]
with $\norm{u}^\rho_\infty = \sup_{t\in I}\abs{u(t)}\rho (t)$.

If $X$ is any of the spaces above and $\rho$ is an admissible weight
function, the \emph{translation semigroup} $\mathcal{T}=(T_t)_{t\geq 0}$ is
defined as usual by\vspace{-2pt}
\[ 
T_tf(x)=f(x+t),\quad\  t\geq 0,\, f\in X,\,  x\in \R,\vspace{-2pt}
\]
and it is a $C_0$-semigroup (see e.g. \cite{deschschappacher97}).

Hypercyclicity and chaos for translation semigroups have been
characterized in \cite{deschschappacher97,matsuitakeo}. In particular,
if $X$ is one of the spaces $L_p^\rho(\R)$ or $C_0^\rho(\R)$ with an
admissible weight function $\rho$, then the translation semigroup
$\mathcal{T}$ on $X$ is hypercyclic if and only if for each
$\theta\in\R$ there exists a sequence $(t_j)_j$ of positive real
numbers tending to $\infty$ such that
\[ 
\lim_{j\to\infty} \rho(t_j + \theta) = \lim_{j\to\infty} \rho(-t_j+\theta) =0.
\]

If $X=C_0^\rho(\R)$, then the translation semigroup $\mathcal{T}$ on
$X$ is chaotic if and only if $\lim_{x\to \pm \infty}\rho(x)=0$.

If $X=L_p^\rho(\R)$, then $\mathcal T$ is chaotic if and only if for all 
$\varepsilon,l > 0$ there exists $P>0$ such that
\[ 
\sum_{k\in\Z\setminus\{ 0\}} \rho (l+kP)<\varepsilon.
\] 

The concept of frequent hypercyclicity was extended to $C_0$-semigroups in~\cite{BaGri07}.

The \emph{lower density} of a measurable set $M\subseteq
\mathbb{R}_+$ is defined by 
\[
\mathop{\underline{{\rm Dens}}} (M):=\liminf_{N\rightarrow \infty} \mu(M\cap [0,N])/N,
\] 
where $\mu$ is the Lebesgue measure on $\mathbb{R}_+$.

A $C_0$-semigroup $(T_t)_{t\geq 0}$ on a separable Banach space $X$ is
said to be \emph{frequently hypercyclic} if there exists $x\in X$
(called a \emph{frequently hypercyclic vector} for the semigroup) such that
$\underline{{\rm Dens}} (\{t\in \mathbb{R}_+:T_tx\in U\})>0$ for any
non-empty open set $U\subseteq X$.  In
\cite{conejero_muller_peris,mangino_peris2011frequently}, it was
proved that $x\in X$ is a (frequently) hypercyclic vector for
$(T_t)_{t\geq 0}$ if and only if $x$ is a (frequently) hypercyclic
vector for each single operator $T_t$, $t>0$. However, this is not the
case in general if we consider the chaos property~\cite{bayart_bermudez}.

In \cite{mangino_peris2011frequently},  a continuous
version of the Frequent Hypercyclicity Criterion was proved, based on the Pettis
integral and the fact that chaotic translation semigroups on weighted spaces of
integrable functions are frequently hypercyclic.

Moreover, in \cite{murillo_peris????strong}, it is proved that the
Frequent Hypercyclicity Criterion for semigroups implies the existence
of strongly-mixing Borel probability measures with full support.

In this paper we characterize, in the spirit of \cite{bayartruzsa},
frequently hypercyclic translation semigroups on $L^p_\rho(I)$ and,
for $\sup_k\rho(k+1)/\rho(k)<\infty$, on
$C_0^\rho(I)$. The main results are Theorems \ref{c0fh} and \ref{lpfh}, 
proved in the last section. In particular, Theorem \ref{c0fh}
will be a consequence of Theorem \ref{pseudoshift} which
characterizes frequent hypercyclicity of the so-called pseudo-shifts
on $c_0(I)$ spaces, where $I$ is a countably infinite set.

\section{Frequently hypercyclic weighted pseudo-shift}

We recall the concept of weighted pseudo-shift  introduced by Grosse-Erd\-mann~\cite{grosse-erdmann2000hypercyclic}.

Given topological sequence spaces $X, Y$ over countably infinite sets
$I$ and $J$ respectively, a continuous linear operator $T:X\rightarrow Y$ is called
a \emph{ weighted pseudo-shift} if there is a sequence $(b_j)_{j\in
  J}$ of non-zero scalars and an injective mapping $\phi:J\rightarrow
I$ such that 
\[
T[(x_i)_{i\in I}]=(b_jx_{\phi(j)})_{j\in J}\quad\ \text{for}\ (x_i)_{i\in I}\in X.
\]

We will be interested in weighted pseudo-shifts acting on spaces of
vanishing sequences. More precisely, given a countable set~$I$, we
consider the space
\[ 
c_0(I)=\{(x_i)_{i\in I}\in \R^I \mid  \forall \varepsilon>0\ \exists J\subseteq I,\, J\ \mbox{finite}\ 
\forall i\in I\setminus J: |x_i|<\varepsilon\},
\]
endowed with the norm $\|(x_i)_{i\in I}\|=\sup_{i\in I}|x_i|.$

Obviously, if  $(W_p)_{p\in\N}$ is an increasing sequence of finite subsets of $I$ such that $I=\bigcup_{p=1}^\infty W_p$, then 
\begin{equation}
c_0(I)=\{(x_i)_{i\in I}\in \R^I \mid  \forall \varepsilon>0\ \exists n\in\N 
\forall i\in I\setminus W_n: |x_i|<\varepsilon\}.
\end{equation}

The first result that we prove is a characterization of frequently
universal sequences of weighted pseudo-shifts on $c_0(I)$.

We recall that a sequence $(T_n)_{n\in\N}$ of continuous mappings
between topological spaces $X$ and $Y$ is said to be \emph{frequently
universal} if there exists $x\in X$, called a \emph{frequently universal
vector} for the sequence, such that for every non-empty open set
$U\subseteq Y$, 
$$
\ldens(\{n\in\N : T_nx\in U\})>0.
$$

Following the idea of Bayart and Ruzsa  \cite{bayartruzsa} for
weighted backward shifts on $c_0(\Z)$, we first obtain
a characterization for weighted pseudo-shifts.

\begin{thm}\label{pseudoshift}
Let $(T_n)_{n\in\N}$ be a sequence of weighted pseudo-shifts on $c_0(I)$ defined by $T_n[(x_i)_{i\in I}]=(b_i^nx_{\phi_n(i)})_{i\in I}$, where the $b_i^n$ are positive real numbers. Assume that:

\wc{\rm(iii)}
\begin{itemize}
\item[\rm(i)] $(\phi_n)_n$ is a run-away sequence, i.e. for any finite subsets $I_0, J_0\subseteq I$ there exists $n_0\in\N$ such that, for every $n\geq n_0$, $\phi_n(J_0)\cap I_0=\emptyset$,
\item[\rm(ii)] there exists $\rho>1$ such that $\mfrac{1}{\rho^{|n-m|}}\leq \mfrac{b_s^n}{b_t^m}$ for all $n, m\in\N$ and $s, t\in I$ such that $\phi_n(s)=\phi_m(t)$,
\item[\rm(iii)] there exists $g:I\rightarrow\R$ such that $|n-m|\leq|g(s)-g(t)|$ for all $n, m\in\N$ and $s, t\in I$ such that $\phi_n(s)=\phi_m(t)$,
\item[\rm(iv)] $(W_p)_{p\in\N}$ is an increasing sequence of finite subsets of $I$ such that $I=\bigcup_{p=1}^\infty W_p$.
\end{itemize}
Then $(T_n)_{n\in\N}$ is frequently universal on $c_0(I)$ if and only if there exist a sequence $(M(p))_{p\in \N}$ of positive real numbers tending to $\infty$ and a sequence $(E_p)_{p\in \N}$ of subsets of $\N$ such that:

\begin{aitemize}{\rm(a)}
\item[\rm(a)] for any $p\geq 1$, $\ldens(E_p)>0$,
\item[\rm(b)] for any distinct $p,q\geq 1$, $n\in E_p$ and $m\in E_q$, $\phi_n(W_p)\cap \phi_m(W_q)=\emptyset$,
\item[\rm(c)] for every $p\geq 1$ and every $s\in W_p$, $\lim_{n\rightarrow\infty, n\in E_p}b_s^n=\infty$,
\item[\rm(d)] for any $p,q\geq 1$, $n\in E_p$, $m\in E_q$ with $n\neq m$, $t\in W_q$ and $s\in I$ such that $\phi_n(s)=\phi_m(t)$,
$$
\frac{b_s^n}{b_t^m}\leq \frac{1}{M(p)M(q)}.
$$
\end{aitemize}
Moreover, one can replace ``there exists a sequence $(M(p))_{p\in\N}$ and a sequence 
$(E_{p})_{p\in\N}$'' by ``for any sequence $(M(p))_{p\in\N}$ there exists a sequence $(E_{p})_{p\in\N}$''.
\end{thm}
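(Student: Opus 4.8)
The plan is to prove the characterization in the stronger ``for any $(M(p))$'' form, i.e.\ the two implications: (A) if there exist sequences $(M(p))_p$ and $(E_p)_p$ satisfying (a)--(d) then $(T_n)_n$ is frequently universal; and (B) if $(T_n)_n$ is frequently universal, then for \emph{every} prescribed sequence $(M(p))_p$ of positive reals tending to $\infty$ there is a sequence $(E_p)_p$ satisfying (a)--(d). Since ``$\forall M$'' trivially implies ``$\exists M$'', (A) and (B) together give both the equivalence and the ``moreover'' clause. The blueprint throughout is Bayart and Ruzsa's treatment of frequently hypercyclic weighted backward shifts on $c_0(\Z)$ \cite{bayartruzsa}, conditions (i)--(iii) being the abstract substitute for the shift structure together with the boundedness of the weights.

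For (A) I would first fix a sequence $(y_j)_j$ of finitely supported vectors running, with infinite repetition, through a countable dense subset of $c_0(I)$, together with an injection $j\mapsto p(j)$ with $\operatorname{supp}y_j\subseteq W_{p(j)}$ and $M(p(j))\geq 2^{j}(1+\norm{y_j})$; infinite repetition guarantees that, along the occurrences of a fixed vector, $M(p(j))\to\infty$, which is what allows a single positive-density set to handle a given open set. Using (a) together with a standard extraction argument (as in \cite{bayartruzsa}), one then selects pairwise disjoint sets $\tilde E_j\subseteq E_{p(j)}$ with $\ldens(\tilde E_j)>0$, consecutive elements of $\tilde E_j$ differing by more than $\max_{s,t\in W_{p(j)}}\abs{g(s)-g(t)}$ and, by (c), all weights $b^n_s$ ($s\in W_{p(j)}$, $n\in\tilde E_j$) exceeding $2^{j}(1+\norm{y_j})$. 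One defines $x$ by $x_{\phi_n(s)}:=(y_j)_s/b^n_s$ for $s\in W_{p(j)}$, $n\in\tilde E_j$, and $x_i:=0$ otherwise; conditions (i)--(iii) and (b) make this unambiguous and force the nonzero coordinates of $x$ to tend to $0$, so $x\in c_0(I)$. Finally one checks that for $N\in\tilde E_j$ one has $(T_Nx)_s=(y_j)_s$ for every $s\in W_{p(j)}$, while for $s\notin W_{p(j)}$ the coordinate $(T_Nx)_s$ is either $0$ or, by (d) and the size of $M(p(j))$, at most $2^{-j-1}$; hence $\norm{T_Nx-y_j}\leq 2^{-j-1}$. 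Given a non-empty open $U$ one picks an occurrence $y_j\in U$ with $2^{-j}$ small enough, obtaining $\tilde E_j\subseteq\{N:T_Nx\in U\}$ and thus $\ldens(\{N:T_Nx\in U\})\geq\ldens(\tilde E_j)>0$; so $x$ is frequently universal.

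For (B), let $x$ be a frequently universal vector and let $(M(p))_p$ be given. For each $p$ I would choose a non-empty open set $U_p$ — a small ball around an element supported on $W_p$, with centre and radius depending on $M(p)$ — and put $E^0_p:=\{n:T_nx\in U_p\}$, so that $\ldens(E^0_p)>0$ by frequent universality. From $n\in E^0_p$ and $s\in W_p$ one reads off that $b^n_sx_{\phi_n(s)}$ is bounded away from $0$ and $\infty$; the run-away property (i) gives $\phi_n(s)\to\infty$ along $E^0_p$, hence $x_{\phi_n(s)}\to 0$ and $b^n_s\to\infty$, and after discarding a bounded initial part one may assume the $b^n_s$ ($s\in W_p$) exceed any preassigned threshold — this gives (c) and supplies the lower bound used in (d). Conditions (i) and (iii) then confine the interaction of two levels: a collision $\phi_n(s)=\phi_m(t)$ with $t\in W_q$ and with $s$ ranging in a fixed finite set forces $\abs{n-m}$ into a bounded window, while (ii) shows such a collision forces $b^n_s/b^m_t\geq\rho^{-\abs{n-m}}$, so the inequality demanded by (d) can hold only for collisions occurring ``far apart'', which are exactly the ones one must rule out. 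The sets $E_p$ are then extracted from the $E^0_p$ by a diagonal thinning which, around each level already fixed, removes only a portion of controlled density, preserving $\ldens(E_p)>0$ while enforcing (b) and (d); the bound in (d) for the remaining, well-separated collisions follows from the smallness built into $U_p$ together with (c).

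The step I expect to be the main obstacle is precisely the thinning in (B): one must pass from the sets $E^0_p$ — which frequent universality hands over but which interact with each other in an uncontrolled way — to subsets $E_p$ of still positive lower density that satisfy the \emph{exact} disjointness (b) and the \emph{uniform} weight-ratio estimate (d) for \emph{every} pair of levels and \emph{every} prescribed $(M(p))_p$, the hardest case being collisions $\phi_n(s)=\phi_m(t)$ with $s\notin W_p$. Keeping the density bookkeeping alive under these infinitely many simultaneous constraints is the delicate point, and this is where (i), (ii) and (iii) do their real work; it mirrors the combinatorial core of \cite{bayartruzsa}. The remaining ingredients — the extraction lemmas, the convergence of the series for $x$, and the elementary coordinate estimates isolating the negligible terms of $T_Nx$ — are routine.
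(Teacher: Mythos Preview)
Your sufficiency argument (A) is essentially the paper's, repackaged with an injection $j\mapsto p(j)$ in place of a direct indexing by $p$; nothing to add there.

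The necessity direction (B) misses the decisive idea. You plan to choose generic balls $U_p$ and then recover (b) and (d) by a ``diagonal thinning'' of the sets $E^0_p$, which you flag as the main obstacle. But such a thinning is exactly where your argument would stall: for a collision with $s\in W_p$, $t\in W_q$, condition (iii) only gives $|n-m|\le\varPsi(p)+\varPsi(q)$ (where $\varPsi(r):=\max_{t\in W_r}|g(t)|$), and since this bound grows with $q$, removing from a fixed $E^0_p$ all potential collisions with \emph{every} level $q$ can destroy its lower density. The paper (following Bayart--Ruzsa on $c_0(\Z)$, whose necessity proof contains no such combinatorics either) avoids this entirely by loading the work into the \emph{choice} of $U_p$: one takes the ball of radius $1/p$ around $\alpha_p\sum_{i\in W_p}e_i$, with heights growing so fast that $\alpha_p>4\alpha_{p-1}\rho^{2\varPsi(p)}$. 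Then for $n\in E^0_p$ one has $\alpha_p/2\le |b^n_s x_{\phi_n(s)}|\le 2\alpha_p$ when $s\in W_p$, and $|b^n_s x_{\phi_n(s)}|<1/p$ when $s\notin W_p$. Condition (b) is now automatic: a would-be collision with $p<q$ forces $b^n_s/b^m_t\ge\rho^{-2\varPsi(q)}$ by (ii) and (iii), yet $b^n_s/b^m_t\le 4\alpha_p/\alpha_q<\rho^{-2\varPsi(q)}$ by the height estimates. For (d) only the trivial \emph{per-level} thinning (take every $(2[\varPsi(p)]+3)$-th element so that distinct $n,m\in E_p$ differ by more than $2\varPsi(p)$) is needed; this forces $s\notin W_p$ in every surviving collision, whence $b^n_s/b^m_t\le(1/p)(2/\alpha_q)\le 1/(pq)$. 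Passing from $M(p)=p$ to an arbitrary $(M(p))$ is then a one-line subsequencing of the $E_p$'s. So there is no cross-level density bookkeeping at all --- your ``main obstacle'' dissolves once the heights $\alpha_p$ are chosen correctly.
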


\proof
  We first observe that if properties (a) to (d) hold for some
  sequence $(M(p))_{p\in\N}$, then they are also satisfied for any
  sequence $(M(p))_{p\in\N}$, on considering, if necessary, a subsequence
  of $(E_p)_{p\in\N}$.

``$\Rightarrow$'':\hods Let $x\in c_0(I)$ be a frequently universal vector for $(T_n)_{n\in\N}$. Let $(\alpha_p)_{p\in\mathbb{N}}$ be a strictly increasing sequence of positive real numbers such that $\alpha_1=2$ and for all $p\geq2$, 
$\alpha_p> 4\alpha_{p-1}\rho^{2\varPsi(p)}$, where $\varPsi(p)=\max\{|g(t)|:\break t\in W_p\}$. Define
$$
F_p=\Bigl\{n\in\mathbb{N}: \|T_nx-\alpha_p\sum_{i\in W_p}e_i\|<\mfrac{1}{p}\Bigr\}.
$$
If $F_p=\{ n_k^p: k\in\N\}$, where $(n_k^p)_{k\in\N}$ is an increasing sequence of natural numbers, we define 
$E_p=\{ n^p_{(2[\varPsi(p)]+3)k}: k\in\N\}$ where $[\varPsi(p)]$ is the integer part of~$\varPsi(p)$. 

Clearly $\ldens(E_p)>0$ and the distance between two different elements of $E_p$ 
is greater than $2\varPsi(p)$. Moreover
\begin{equation}\label{claim}
\forall p\in \N\ \forall s\in W_p\ \forall n\in E_p: \quad 
\mfrac{\alpha_p}{2} \leq |b_s^nx_{\phi_n(s)}|<2\alpha_p.
\end{equation}
Indeed, 
$b_s^nx_{\phi_n(s)}$ is the $s$th coefficient of $T_nx$, so
$$
|b_s^nx_{\phi_n(s)}|\leq \Bigl\|T_nx-\alpha_p\sum_{i\in W_p}e_i\Bigr\|+ \alpha_p\Bigl\|\sum_{i\in W_p}e_i\Bigr\|
<\frac{1}{p}+\alpha_p<2\alpha_p,
$$
while 
\begin{align}\label{eqb_t}
|b_s^sx_{\phi_n(s)}|&\geq \alpha_p- |b_s^nx_{\phi_n(s)}-\alpha_p|\geq \alpha_p-\Bigl\|T_nx-\alpha_p\sum_{i\in W_p}e_i\Bigr\|\\[-6pt]
& \geq \alpha_p-\mfrac{1}{p}\geq \mfrac{\alpha_p}{2}.\notag
\end{align}
In particular,
\begin{equation}\label{claim1} 
\forall p\in \N\ \forall s\in W_p\ \forall n\in E_p: \quad x_{\phi_n(s)}\neq 0.
\end{equation}

In order to prove (b), 
fix $p\neq q$, with $p<q$, $n\in E_p,m\in E_q$ and assume 
by contradiction, that there exist $s\in W_p$ and $t\in W_q$ such that $\phi_n(s)=\phi_m(t).$ 
Then, by \eqref{claim},
$$
\frac{1}{\rho^{2\varPsi(q)}}\leq\frac{1}{\rho^{|n-m|}}\leq
\frac{|b_s^nx_{\phi_n(s)}|}{|b_t^mx_{\phi_m(t)}|}\leq 2\alpha_p\frac{2}{\alpha_q} \leq 4\frac{\alpha_{q-1}}{\alpha_q},$$
contradicting the choice of $(\alpha_p)_p$.

Now let $p\geq 1$ and $s\in W_p$. Let $M>0$. Given
$\varepsilon=\mfrac{\alpha_p}{(2M)}$, since $x\in c_0(I)$, there exists
$J\subseteq I$ finite such that $|x_i|<\varepsilon$ for all $i\in
I\setminus J$. Since ($\phi_n$) is a run-away sequence, there exists
$n_0\in\N$ such that for all $n\in\N$ with $n>n_0$ and  all $s\in W_p$ we have
$\phi_n(s)\notin J$, and so $|x_{\phi_n(s)}|<\varepsilon$. Hence,
for all $n\in E_p$ with $n\geq n_0$, by \eqref{claim} and \eqref{claim1},
$$
b_s^n\geq\frac{\alpha_p}{2|x_{\phi_n(s)}|}\geq\frac{\alpha_p}{2\varepsilon}=M.
$$
So, we have proved (c).

Finally, let $p,q\geq 1$, $n\in E_p$, $m\in E_q$ with $n\not=m$, $t\in
W_q$ and $s\in I$ be such that $\phi_n(s)=\phi_m(t)$. Then
$p\not=q$, as otherwise, since $n\not=m$, by the definition of~$E_p$ we have
$|n-m|>2\varPsi(p)$; on the other hand,  (iii) yields $|n-m|\leq
2\varPsi(p)$,  a contradiction.  Therefore, since
$p\not=q$, we can apply (b) to get $s\notin W_p$ and so the
$s$-coefficient of $T_nx-\alpha_p\sum_{i\in W_p}e_i$ is
$b_s^nx_{\phi_n(s)}$. Hence, by~\eqref{claim},
$$
\frac{b_s^n}{b_t^m}=\frac{|b_s^nx_{\phi_n(s)}|}{|b_t^mx_{\phi_m(t)}|}
\leq \frac{1}{p}\,\frac{2}{\alpha_q}\leq\frac{1}{p} \,\frac{1}{q}.
$$
Hence (d) holds with $M(p)=p$.

``$\Leftarrow$'':\hods
We first observe that if (b) holds, then
\begin{equation}\label{conditionb}
\forall p,q\in\N, \, p\not=q:\quad  E_p\cap E_q=\emptyset.
\end{equation}
Indeed, assume $p<q$; if there exists $n\in E_p\cap E_q$, then for any
$s\in W_p\subseteq W_q$, one gets $\phi_n(s)\in \phi_n(W_p)\cap
\phi_n(W_q)$, contradicting (b).

As properties (a) to (d) hold true for any sequence $(M(p))_{p\in\N}$,
we may assume that $M(p)\geq \rho^{4p}$ for any $p\geq 1$.

We set
$$
E_p'=E_p\setminus\bigcup_{s\in W_p}\{n\in\N: b_s^n\leq \rho^{4p}\}.
$$
\loo-1 By (c), $E_p'$ is a cofinite subset of $E_p$, hence
$\ldens(E_p')>0$. If $E_p'=\{ n_k^p: k\in\N\}$,\vspace{1pt} where $(n_k^p)_k$ is
an increasing sequence of natural numbers, we consider the set $G_p=\{
n^p_{(2[\varPsi(p)]+3)k}: k\in\N\}$. It has positive lower density
and moreover the distance between two different elements of $G_p$ is
greater than $2\varPsi(p)$.

Let $(y^p)_{p\geq 0}$ be a dense sequence in $c_0(I)$ such that supp$
(y^p)\subseteq W_p$ and $\|y^p\|<\rho^p$.  We define $x\in
\mathbb{R}^I$ by setting
\begin{equation}
x_i= \begin{cases}
\frac{1}{b_s^n}y^p(s) &\text{if $i=\phi_n(s),\, n\in G_p,s\in W_p$,}\\
0 &\text{otherwise}.
\end{cases}
\end{equation}
This definition is correct, because if $i=\phi_n(s)=\phi_m(t)$
with $n\in G_p$, $s\in W_p$, $m\in G_q$ and $t\in W_q$, then, by (b),
$p=q$, and assumption (iii) yields $|n-m|\leq|g(s)-g(t)|\leq
2\varPsi(p)$; hence, by the definition of $G_p$, $n=m$ and so 
$s=t$, by the injectivity of~$\phi_n$.

We have $x\in c_0(I)$. Indeed, given $\varepsilon>0$, there
exists $p_0\in\N$ such that for $p\geq p_0$ and $n\in G_p$, $s\in W_p$,
$i=\phi_n(s)$,
$$
|x_i|\leq \frac{\rho^p}{\rho^{4p}}\leq \varepsilon.
$$ 
If $p\leq p_0$, then
$$
|x_i|\leq \frac{{\rho}^{p_0}}{b_s^n}\rightarrow 0\quad\ \mbox{as}\ n\rightarrow \infty.
$$
We finally show that $x$ is a frequently hypercyclic vector by proving
that for all $p\geq 1$ and $n\in G_p$, $\|T_nx-y^p\|<\varepsilon(p)$ with
$\varepsilon(p)\rightarrow 0$ as $p\rightarrow \infty$.  We have
$$
\|T_nx-y^p\|=\sup_{s\notin W_p}|b_s^nx_{\phi_n(s)}|.
$$
If $s\notin W_p$, then $b_s^nx_{\phi_n(s)}$ does not vanish if and only if 
\begin{equation}\label{nm} 
\exists q\geq 1\ \exists m\in G_q,\, t\in W_q : \quad  \phi_n(s)=\phi_m(t).
\end{equation}
If \eqref{nm} holds, then $n\not= m$, as otherwise, $p=q$ by
\eqref{conditionb} and $s=t$ by the injectivity of $\phi_n$, which
is impossible since $s\notin W_p$ and $t\in W_p=W_q$.

Hence, we can apply (d) to get 
$$
|b_s^nx_{\phi_n(s)}|=\biggl|\frac{b_s^n}{b_t^m}y^q(t)\biggr|\leq \frac{\rho^q}{M(p)M(q)}
\leq\frac{\rho^q}{\rho^p\rho^q}=\frac{1}{\rho^p}.\squ
$$ 

As a corollary, we obtain a characterization of frequent
hypercyclicity for weighted backward shift operators defined on
$c_0(I)$ where $I\subseteq\R$.

\begin{cor}\label{fhc0V}
Let $I$ be a countably infinite subset of $\R$ such that $I+\mathbb{Z}\subseteq I$ $($resp. $I+\N\subseteq I)$, $I=\bigcup_{p=1}^{\infty} W_p$, where $(W_p)_p$ is an increasing sequence of finite subsets.
Let $(w_i)_{i\in I}$ be a  family of positive real numbers such that
 \begin{equation}\label{BB} 0<\inf_{i\in I}w_i \leq \sup_{i\in I} w_i <\infty.\end{equation} The operator $T:c_0(I)\rightarrow c_0(I)$ defined by
$T(x_i)_{i\in I}=(w_{i}x_{i+1})_{i\in I}$ is frequently hypercyclic on $c_0(I)$ if and only if there exist a sequence $(M(p))_{p\in\N}$ of positive real numbers tending to $\infty$ and a sequence $(E_p)_{p\in\N}$ of subsets of $\N$ such that

\begin{aitemize}{\rm(a)}
\item[\rm(a)] for any $p\geq 1$, $\ldens(E_p)>0$,
\item[\rm(b)] for any $p,q\geq 1$, $p\neq q$, $(E_p+W_p)\cap (E_q+W_q)=\emptyset$,
\item[\rm(c)] for all $p\geq 1$ and  $s\in W_p$, $\lim_{n\rightarrow\infty,\,n\in E_p}w_{s}
\ldots w_{s+n-1}=\infty$,
\item[\rm(d)] for any $p,q\geq 1$, $n\in E_p$ and $m\in E_q$ with $m\neq n$, and $t\in W_q$ 
$($resp. $t\in W_q$ such that $t+(m-n)\in I)$,
$$
\frac{w_{m-n+t}\ldots w_{m+t-1}}{w_{t}\ldots w_{t+m-1} }\leq\frac{1}{M(p)M(q)}.
$$
\end{aitemize}
Moreover, one can replace ``there exist a sequence $(M(p))_{p\in\N}$ and a sequence $(E_{p})_{p\in\N}$'' 
by ``for any sequence $(M(p))_{p\in\N}$ there exists a sequence $(E_{p})_{p\in\N}$''.
\end{cor}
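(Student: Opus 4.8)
The plan is to obtain the statement as a direct application of Theorem~\ref{pseudoshift} to the sequence of iterates $(T^{n})_{n\in\N}$. First I would note that, by \eqref{BB} and the inclusion $I+1\subseteq I$, the operator $T$ is bounded on the separable Banach space $c_{0}(I)$, and that an easy induction gives
\[
T^{n}\bigl[(x_{i})_{i\in I}\bigr]=(b_{i}^{n}x_{i+n})_{i\in I},\qquad
b_{i}^{n}:=w_{i}w_{i+1}\cdots w_{i+n-1}>0 ,
\]
so that $T^{n}$ is exactly the weighted pseudo-shift with positive weights $b^{n}_{i}$ and injective map $\phi_{n}(i)=i+n$ (well defined since $I+\N\subseteq I$). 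Since, by definition, $T$ is frequently hypercyclic if and only if the sequence $(T^{n})_{n\in\N}$ is frequently universal on $c_{0}(I)$, it is enough to verify the hypotheses (i)--(iv) of Theorem~\ref{pseudoshift} for this sequence and then to rewrite its conditions (a)--(d) in the present notation.

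Checking the hypotheses is straightforward. For (i), if $I_{0},J_{0}\subseteq I$ are finite then $\phi_{n}(J_{0})=J_{0}+n$ has minimum $>\max I_{0}$ for all large $n$, so $(\phi_{n})_{n}$ is run-away. The relation $\phi_{n}(s)=\phi_{m}(t)$ means $s+n=t+m$, i.e.\ $|s-t|=|n-m|$, so (iii) holds with $g\colon I\to\R$ equal to the inclusion map. Under the same relation, say with $n\le m$, one has $s=t+(m-n)$ and the weight products telescope, giving $b^{n}_{s}/b^{m}_{t}=\bigl(w_{t}w_{t+1}\cdots w_{t+(m-n)-1}\bigr)^{-1}$ (and symmetrically a product of $n-m$ of the $w_{i}$ when $m\le n$); by \eqref{BB} this ratio is $\ge\rho^{-|n-m|}$ with $\rho:=\max\{\sup_{i}w_{i},\ (\inf_{i}w_{i})^{-1},\ 2\}>1$, which is (ii). Finally (iv) is part of the hypotheses of the corollary.

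Applying Theorem~\ref{pseudoshift} and translating: (a) is unchanged. Condition (b) there says $\phi_{n}(W_{p})\cap\phi_{m}(W_{q})=(W_{p}+n)\cap(W_{q}+m)=\emptyset$ for all $n\in E_{p}$, $m\in E_{q}$ with $p\ne q$, which is exactly $(E_{p}+W_{p})\cap(E_{q}+W_{q})=\emptyset$. In (c), $b^{n}_{s}=w_{s}\cdots w_{s+n-1}$, yielding the stated limit. In (d), $\phi_{n}(s)=\phi_{m}(t)$ forces $s=t+(m-n)$: when $I+\Z\subseteq I$ this $s$ automatically lies in $I$ for every $t\in W_{q}$, whereas when only $I+\N\subseteq I$ it does so exactly for those $t\in W_{q}$ with $t+(m-n)\in I$ (which is automatic when $m>n$), whence the ``resp.'' clause; and then $b^{n}_{s}/b^{m}_{t}=\dfrac{w_{m-n+t}\cdots w_{m+t-1}}{w_{t}\cdots w_{t+m-1}}$, as required. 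The closing ``moreover'' statement is inherited word for word from the corresponding assertion in Theorem~\ref{pseudoshift}.

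The only point requiring any care is the verification of (ii): one must observe the telescoping of the products of weights along a common value $\phi_{n}(s)=\phi_{m}(t)$ and then extract from the two-sided bound \eqref{BB} a single constant $\rho>1$ dominating the reciprocal of that product uniformly in $n,m$. The remaining steps are purely a change of notation, with only a little attention needed to keep apart the two cases $I+\Z\subseteq I$ and $I+\N\subseteq I$ in condition~(d).
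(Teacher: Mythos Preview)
Your proposal is correct and follows essentially the same route as the paper: apply Theorem~\ref{pseudoshift} to the iterates $T_n=T^n$, identify $b_s^n=w_s\cdots w_{s+n-1}$, $\phi_n(s)=s+n$, $g(s)=s$, verify hypotheses (i)--(iv), and translate (a)--(d). Your write-up is in fact more explicit than the paper's, which dismisses (i), (iii), (iv) as trivial and simply asserts that (ii) follows from \eqref{BB}; your telescoping computation for (ii) and your handling of the ``resp.'' clause in (d) are exactly the details the paper leaves to the reader.
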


\begin{proof}
Observe that $T$ is frequently hypercyclic if and only the family of its powers $(T^n)_{n\in\N}$ is frequently universal. If we set $T_n=T^n$, we have that 
\[
T_n[(x_i)_{i\in I}]=(w_{i}w_{i+1}\ldots w_{i+n-1}x_{i+n})_{i\in I}.
\]
Therefore, if we set 
\[
b_s^n:=w_{s}w_{s+1}\ldots w_{s+n-1},\quad\ \phi(s):=s+1,\quad\ \phi_n:=\underbrace{\phi\circ \dots\circ\phi}_n
\]
and $g(s):=s$ for all $s\in I$ and $n\in\N$, then assumptions (i), (iii) and (iv) of Theorem \ref{pseudoshift} are trivially satisfied, while (ii) follows from \eqref{BB}. The characterization follows by Theorem~\ref{pseudoshift}.
\end{proof}

\begin{rem}
Observe that condition (d) is equivalent to saying that 
for any $p,q\geq 1$, $n\in E_p$ and $m\in E_q$ with $n\neq m$, and $t\in W_q$ (resp. $t\in W_q$ such that $t+(m-n)\in I$),
\begin{equation}
\begin{cases}
w_{t}\ldots w_{t+m-n-1}\geq M(p)M(q) &\text{if $m>n$,}\\
w_{t+(m-n)}\ldots w_{t-2}w_{t-1}\leq \frac{1}{M(p)M(q)} &\text{if $m<n$},
\end{cases}
\end{equation}
and we obtain the conditions of \cite[Theorem 12]{bayartruzsa}.
\end{rem}

\section{Frequently hypercyclic translation semi\-groups}

The purpose of this section is to obtain a characterization of
frequent hypercyclicity for translation semigroups on
$C_0^{\rho}(\mathbb{R})$ under the assumption
$\sup_{k\in\Z}\mfrac{\rho(k+1)}{\rho(k)}\allowbreak <\infty$, and on
$L_p^\rho(\R)$.

In the following we set, for any $r,s\in \Z$, $\llbracket r,s\rrbracket=[r,s]\cap\Z$.

To treat the case of continuous functions, we recall some known
results about the construction of a Schauder basis in
$C_0(\mathbb{R})$, referring for more details to~\cite{doldeckmann1982schauder}.

Let $\widetilde D$ be the set of dyadic numbers except $1$, that is,
$\widetilde{D}=\bigcup_{n=0}^\infty D_n$ where $D_0=\{0\}$ and, if
$n\geq 1$,
\[ 
D_n=\biggl\{ \frac{2k-1}{2^n} : k=1,\ldots,2^{n-1}\biggr\}.
\] 
For any $\tau\in D_n$, set $\tau^{-}=\tau-2^{-n}$ and $\tau^{+}=\tau+2^{-n}$.

Let $\varphi(x):=\max\{0,1-|x|\}$ for $x\in \R$, and define for every
$k\in\Z$, $\tau\in D_n$, and $x\in\R$,
\begin{equation}\label{phi}
\varphi_{k+\tau}(x):=\varphi(2^n(x-k-\tau)).
\end{equation}
Observe that $\varphi_{k+\tau}(x)=\varphi_\tau(x-k)$ where
$\varphi_\tau$ is the Faber--Schauder dyadic function with peak at
$\tau$.

Set $I=\mathbb{Z}+\widetilde{D}$ and consider the partition
$I=\bigcup_{n\geq 0}V_n$ where $V_0=\{0\},$ and
\begin{equation}\label{basis}
V_n=\{-n+h+D_h: h=0,1,\ldots,n\}\cup\{h+D_{n-h}: h=1,\ldots,n\}.
\end{equation}
We define an order on $I$ assuming that the elements of $V_k$ are
earlier than those in $V_{n}$ if $0\leq k<n,$ and within each
$V_n$ we keep the usual order.

The system $(\varphi_i)_{i\in I}$ is a Schauder basis in $C_0(\mathbb{R})$. More precisely, if $f\in C_0(\mathbb{R})$, then $f=\sum_{k+\tau\in\Z+\widetilde{D}}a_{k+\tau}\varphi_{k+\tau}$ where
$$
a_{k+\tau}= \begin{cases}
f(k), &\text{$k\in\mathbb{Z},\, \tau=0$,}\\
f(k+\tau)-\frac{1}{2}(f(k+\tau^{-})+f(k+\tau^{+})),  &\text{$k\in \mathbb{Z},\,\tau\neq 0$.}
\end{cases}
$$

By the construction of the functions $\varphi_{k+\tau}$, it follows that  for any $n\in\Z_+$, for every family $(a_{k+\tau})_{k+\tau\in \Z+ D_n}$ of real numbers, and for every $x\in\R$:
\begin{equation}\label{atau} \left| \sum_{k\in\Z, \tau\in D_n} a_{k+\tau} \varphi_{k+\tau}(x)\right| \leq 2\sup_{k\in\Z, \tau\in D_n} |a_{k+\tau}|. \end{equation}

 \noindent If we set  for every $n\in\Z_+$ 
 \[ \widetilde D_n:=\bigcup_{h=0}^n D_h, \qquad W_n=\llbracket -n,n\rrbracket + \widetilde D_n,\]
then clearly $(W_n)_{n\geq 0}$ is an increasing  sequence of finite subsets such that  $\Z+\widetilde D=\bigcup_{n\geq 0} W_n$. Thus 
\begin{align*}&c_0(\Z+\widetilde{D})=\{(a_{k+\tau})_{k,\tau}\in \R^{\Z+\widetilde{D}}\mid\forall \varepsilon >0\ \exists n\in\N:  k+\tau\notin W_n \Rightarrow |a_{k+\tau}|<\varepsilon\}\\
&=\{(a_{k+\tau})_{k,\tau}\in \R^{\Z+\widetilde{D}}\mid\forall \varepsilon >0\ \exists n\in\N: ( |k|>n\ \mbox{or}\ \tau\notin \widetilde{D}_n) \Rightarrow |a_{k+\tau}|<\varepsilon\}.\end{align*}

For any $x\in\R$, let $[x]$ denote the integer part of $x$.

\begin{lem}\label{conjugation}
Let $\rho$ be an admissible weight function on $\R$ such that $\rho(x)=\rho([x])$ for any $x\in\R$ and 
let $T_1:C_0^{\rho}(\mathbb{R})\rightarrow C_0^{\rho}(\mathbb{R})$ be the translation operator 
defined as $T_1f(x)=f(x+1)$. Then $T_1$ is quasiconjugate to the weighted backward shift 
operator $B_w:c_0(\mathbb{Z}+\widetilde{D})\rightarrow c_0(\mathbb{Z}+\widetilde{D})$ defined by 
\[ 
B_w[(x_{k+\tau})_{k+\tau\in \Z+\widetilde D}]= (w_{k+\tau} x_{k+\tau+1})_{k+\tau\in \Z+\widetilde D},
\]
where 
\[
w_{k+\tau}:= \frac{\rho(k+\tau)}{\rho(k+1+\tau)}=\frac{\rho(k)}{\rho(k+1)},\quad\ 
k+\tau\in\Z+\widetilde {D}.
\]
Moreover, $B_w$ is  quasiconjugate to $T_1$.
\end{lem}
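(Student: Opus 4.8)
The plan is to realise both quasiconjugacies through the Faber--Schauder basis $(\varphi_i)_{i\in I}$ of $C_0(\mathbb{R})$ recalled above, with $I=\mathbb{Z}+\widetilde D$. Two elementary identities drive everything. From \eqref{phi} one gets $T_1\varphi_i=\varphi_{i-1}$ for every $i\in I$ (if $i=k+\tau$ with $\tau\in D_n$, then $\varphi_{k+\tau}(x+1)=\varphi(2^n(x-(k-1)-\tau))=\varphi_{(k-1)+\tau}(x)$, and $i-1=(k-1)+\tau\in I$), so $T_1$ is a backward shift in these coordinates; dually, the defining formula for the coefficient functionals gives $a_i(T_1f)=a_{i+1}(f)$ for all $i\in I$. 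Moreover $\rho(x)=\rho([x])$ makes the weights $\tau$-independent, $w_{k+\tau}=\rho(k)/\rho(k+1)=\rho([i])/\rho([i]+1)$ for $i=k+\tau$, and each $a_i$ is a bounded functional on $C_0^\rho(\mathbb{R})$, being a fixed finite combination of point evaluations $f\mapsto f(y)$ with $|f(y)|\le\rho(y)^{-1}\|f\|_\infty^\rho$ and, by admissibility $\rho(k)\le Me^\omega\rho(k+1)$, the relevant values of $\rho$ comparable to $\rho([i])$.

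\textbf{$T_1$ is quasiconjugate to $B_w$.} The obvious candidate $(x_i)_i\mapsto\sum_i\rho([i])^{-1}x_i\varphi_i$ does not work: the Faber--Schauder coefficient space is a proper subspace of $c_0(I)$, and the series genuinely diverges for some $(x_i)\in c_0(I)$ (e.g. at a point $m+\tfrac13$). The cure is to damp by a factor depending only on the dyadic level, and compatibly with the shift: put $\gamma_{k+\tau}:=2^{-n}$ for $\tau\in D_n$ (so $\gamma_{i-1}=\gamma_i$) and define $\phi:c_0(I)\to C_0^\rho(\mathbb{R})$ by $\phi((x_i)):=\sum_{i\in I}\gamma_i\rho([i])^{-1}x_i\varphi_i$. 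For $x\in[m,m+1)$, grouping by dyadic level and applying \eqref{atau} yields $|\phi((x_i))(x)|\,\rho(x)\le C\|(x_i)\|+\sum_{n\ge1}2^{-n}\cdot2\|(x_i)\|$ with $C$ depending only on $M,\omega$; hence the series converges in $C_0^\rho(\mathbb{R})$, $\phi$ is bounded, and the same bound on the tails shows $\phi((x_i))$ vanishes at $\pm\infty$ relative to $\rho$. The range of $\phi$ contains $\operatorname{span}\{\varphi_i:i\in I\}$, which is $\|\cdot\|_\infty^\rho$-dense in $C_0^\rho(\mathbb{R})$ (compactly supported continuous functions are dense, and any such $g$ is the $\|\cdot\|_\infty^\rho$-limit of finitely many terms of its Faber--Schauder expansion, whose supports stay in a fixed bounded set on which $\rho$ is bounded). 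Finally, since $T_1\varphi_i=\varphi_{i-1}$ and $\gamma_{i-1}=\gamma_i$, one computes $T_1\phi((x_i))=\sum_i\gamma_i\rho([i])^{-1}x_i\varphi_{i-1}=\sum_j\gamma_j\rho([j]+1)^{-1}x_{j+1}\varphi_j=\phi(B_w((x_i)))$, the last equality because $w_j=\rho([j])/\rho([j]+1)$.

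\textbf{$B_w$ is quasiconjugate to $T_1$.} Define $\psi:C_0^\rho(\mathbb{R})\to c_0(I)$ by $\psi(f):=(\rho([i])\,a_i(f))_{i\in I}$. To see $\psi(f)\in c_0(I)$, split the index set: for $|k|$ large, $\rho(k)\,|a_{k+\tau}(f)|\lesssim\sup_{|x|\ge|k|-1}\rho(x)|f(x)|$, which tends to $0$ uniformly in $\tau$ since $f\rho\to0$; and for each of the finitely many remaining integers $k$, $|a_{k+\tau}(f)|$ is bounded by the modulus of continuity of $f$ on $[k,k+1]$ at scale $2^{-n}$ (for $\tau\in D_n$), hence $\to0$ as $n\to\infty$, so $\{i:\rho([i])|a_i(f)|\ge\varepsilon\}$ is finite. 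The same estimates give boundedness of $\psi$. The range is dense: any finitely supported $(x_i)$ equals $\psi(f)$ with $f=\sum_i x_i\rho([i])^{-1}\varphi_i$ a finite sum, by biorthogonality $a_j(\varphi_i)=\delta_{ij}$. Finally $B_w\psi=\psi T_1$, since $\psi(T_1f)_i=\rho([i])\,a_{i+1}(f)=w_i\,\rho([i]+1)\,a_{i+1}(f)=w_i\,\psi(f)_{i+1}=(B_w\psi(f))_i$, using $a_i(T_1f)=a_{i+1}(f)$ and $w_i\,\rho([i]+1)=\rho([i])$.

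The main obstacle is precisely the convergence point in the first part: since $(\varphi_i)_{i\in I}$ is merely a Schauder basis, one cannot synthesise out of an arbitrary element of $c_0(I)$, and the damping used to repair this must be chosen constant along the orbits $i\mapsto i-1$ so as not to destroy $T_1\phi=\phi B_w$. Everything else is bookkeeping with the explicit Faber--Schauder formulas, the estimate \eqref{atau}, and the admissibility bound $\rho(k)\le Me^\omega\rho(k+1)$.
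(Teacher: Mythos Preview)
Your proof is correct and coincides with the paper's argument: your map $\phi$ (with the level-dependent damping $\gamma_{k+\tau}=2^{-n}$) is exactly the paper's operator $P$, and your map $\psi$ (weighted Faber--Schauder coefficient map) is exactly the paper's operator $Q$. The only difference is a labelling one: you attach $\phi$ to the statement ``$T_1$ is quasiconjugate to $B_w$'' and $\psi$ to the converse, following the convention of \cite{grosse-erdmann_peris2011linear}, whereas the paper assigns them the other way round; since both intertwining maps are constructed in both proofs, this is immaterial.
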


\begin{proof}
To prove that $T_1$ is quasiconjugate to $B_w$,  we exhibit a continuous linear operator $Q:C_0^\rho(\R) \rightarrow c_0(\mathbb{Z}+\widetilde D)$ with dense range
 such that the following diagram commutes: 
\[ 
\begin{array}{c@{\hskip3pt}c@{\hskip3pt}c}
C_0^\rho(\mathbb{R}) & \stackrel{T_1}{\longrightarrow}&C_0^\rho(\mathbb{R}) \\
{\downarrow}{\hskip1pt\scriptstyle Q}& &{\downarrow}{\hskip1pt\scriptstyle Q}\\
c_0(\mathbb{Z}+\widetilde{D}) &\stackrel{B_w}{\longrightarrow}&c_0(\mathbb{Z}+\widetilde{D}) \\
\end{array} 
\]

Given $f\in C_0^\rho(\mathbb{R})$, we define $Q(f)=(a_{k+\tau})_{k+\tau\in\mathbb{Z}+\widetilde{D}}$
where 
$$
a_{k+\tau}= \rho(k)\cdot \begin{cases} f(k),&\text{$k\in\mathbb{Z},\, \tau=0$,}\\
f(k+\tau)-\frac{1}{2}(f(k+\tau^{-})+f(k+\tau^{+})), &\text{$k\in \mathbb{Z},\,\tau\neq 0$.}
\end{cases}
$$
It holds that $Q(f)\in c_0(\Z+\widetilde D)$. Indeed, since $f\in C_0^{\rho}(\R)$, for every
$\varepsilon>0$,
\begin{itemize}
	\item[(1)]there exists $N_1\in\N$ such that if $|x|>N_1$, then
	$|f(x)\rho(x)|<\frac{\varepsilon}{2}$ 
	\item[(2)] there exists $\delta>0$  such that, for all $x,y \in [-N_1-1, N_1+2]$,  if  $|x-y|<\delta$ then $|f(x)-f(y)|<\frac{\varepsilon}{\rho_{N_1}}$, where $\rho_{N_1}=\max_{k\in \llbracket-N_1-1,N_1+2\rrbracket}{\rho(k)}$.
\end{itemize}

Choose  $N_2\in\N$ such that $\frac{1}{2^{N_2}}<\delta$ and set  $N=\max\{N_1+1,N_2\}$.  Let $k+\tau \in(\Z+\widetilde D) \setminus W_N$. 

If $|k|>N_1+1$, by (1)  it is clear that for every $s\in [0,1\mathclose{[}$,
\[ 
|f(k+s)\rho(k)|=|f(k+s)\rho(k+s)|<\frac \varepsilon 2,
\]
since $|k+s|>N_1$. Hence, by the continuity of $f$, for every $s\in [0,1]$,
\begin{equation}\label{s}
|f(k+s)\rho(k)|\leq \frac \varepsilon 2.
\end{equation}
By replacing $s$ by $\tau$, $\tau^+$ and $\tau^-$ in \eqref{s}, we
immediately get $|a_{k+\tau}|\leq \varepsilon$.

If $|k|\leq N_1+1$, then necessarily $\tau\in D_n$, with $n>N$, and,  in particular, $\tau\neq 0$. It holds that
\[ |\tau-\tau^{-}|<\frac{1}{2^N}<\delta, \   |\tau-\tau^{+}|<\frac{1}{2^N}<\delta, \]
and $ k+\tau, k+\tau^-, k+\tau^+\in [-N_1-1, N_1+2]$, 
 thus, by  (2) we get that 
 \[ |a_{k+\tau}|=|\rho(k)(f(k+\tau)-\tfrac{1}{2}(f(k+\tau^{-})+f(k+\tau^{+})))|\leq \varepsilon.\]

Then $Q: C_0^\rho(\R) \rightarrow c_0(\mathbb{Z}+\widetilde D)$ is
well-defined and clearly linear. Moreover, for every $f\in
C_0^\rho(\R)$, $k\in\Z$, we have 
\[ 
|f(k+s) \rho(k)| =|f(k+s)\rho(k+s)|\leq \|f\|^\rho_\infty\quad\ \mbox{for all}\ s\in [0,1\mathclose{[},
\]
hence, by the continuity of $f$,
\begin{equation}\label{s1}
|f(k+s) \rho(k)|\leq \|f\|^\rho_\infty\quad\ \mbox{for all}\ s\in [0,1].
\end{equation}
By replacing $s$ by $\tau$, $\tau^+$ and $\tau^-$ in \eqref{s1}, with
$\tau\in\widetilde D$, we immediately get  $\|Q(f)\|\leq
2\|f\|^\rho_\infty$, so $Q$ is continuous.

To prove that $Q$ has dense range, it is enough to show that for every
$h\in\Z$ and $ \sigma\in\widetilde D$, 
\[
(x^{h+\sigma}_{k+\tau})_{k\in\Z,\, \tau\in \widetilde D}\in Q(C_0^\rho(\R))\quad
\text{where}\quad x^{h+\sigma}_{k+\tau} =\delta_{(h,\sigma)}(k,\tau),
\]
and, indeed, by the definition of $\varphi_{h+\sigma}$, we have 
$$
Q(\rho(h)^{-1} \varphi_{h+\sigma})= (x^{h+\sigma}_{k+\tau})_{k\in\Z,\, \tau\in \widetilde D}.
$$
Finally, by observing that, for every $f\in C_0^\rho(\R)$, $Q\circ
T_1(f)=(b_{k+\tau})_{k+\tau\in\mathbb{Z}+\widetilde{D}}$, where
\begin{equation*}
b_{k+\tau}= \left\{
\begin{array}{@{}l}
f(k+1)\rho(k)=a_{k+1}\dfrac{\rho(k)}{\rho(k+1)} ,\hfill \mbox{if}\ \tau=0,\\
\begin{array}{@{}l@{}}( f(k+1+\tau)-\frac{1}{2}(f(k+1+\tau^{-})+f(k+1+\tau^{+}))\rho(k)\\[4pt]
\quad\ =a_{k+\tau+1}\dfrac{\rho(k)}{\rho(k+1)} \hfill \mbox{if}\ \tau\not=0,\end{array}
\end{array}
\right.
\end{equation*}
for $k\in\Z$ and $\tau\in\widetilde{D},$ it is immediate  that 
$Q\circ T_1(f) =B_w\circ Q(f)$.\\

Now, let us prove that $B_w$ is quasiconjugate to $T_1$, namely that  there exists a continuous linear operator $P:c_0(\mathbb{Z}+\widetilde D) \rightarrow C_0^\rho(\R)$ with dense range
such that the following diagram commutes: 
\[ 
\begin{array}{c@{\hskip3pt}c@{\hskip3pt}c}
c_0(\mathbb{Z}+\widetilde{D}) & \stackrel{B_w}{\longrightarrow}&c_0(\mathbb{Z}+\widetilde{D}) \\
{\downarrow}{\hskip1pt\scriptstyle P}& &{\downarrow}{\hskip1pt\scriptstyle P}\\
C_0^\rho(\R) &\stackrel{T_1}{\longrightarrow}&C_0^\rho(\R) \\
\end{array} 
\]

Given $({a_{k+\tau}})_{k\in\mathbb{Z}, \tau\in\widetilde{D}}\in c_0(\mathbb{Z}+\widetilde{D})$, we define \begin{equation}\label{series}P\left((a_{k+\tau})_{k\in\mathbb{Z}, \tau\in\widetilde{D}}\right)=\sum_{n=0}^\infty\frac{1}{2^n}\left(\sum_{k\in\Z, \tau\in D_n}\frac{a_{k+\tau}}{\rho(k)}\varphi_{k+\tau}\right).\end{equation}
 Observe that, for every $N\in\N$,  $x\in[-N,N]$, taking \eqref{atau} into account,
\begin{align*} &\left|\sum_{k\in\Z, \tau\in D_n}\frac{a_{k+\tau}}{\rho(k)}\varphi_{k+\tau}(x)\right|
\leq
\left|\sum_{|k|\leq N, \tau\in D_n}\frac{a_{k+\tau}}{\rho(k)}\varphi_{k+\tau}(x)\right|\leq\\
&\leq \sum_{|k|\leq N, \tau\in D_n}\frac{|a_{k+\tau}|}{\rho(k)}\varphi_{k+\tau}(x)\leq 2\frac{\|(a_{k+\tau})_{k,\tau}\|}{\min_{|k|\leq N}\{\rho(k)\}},
\end{align*}
thus by the Weierstrass M-test, the series on the right-hand  side of \eqref{series} is uniformly convergent on $[-N,N]$. 
It follows that $P\big((a_{k+\tau})_{k\in\mathbb{Z}, \tau\in\widetilde{D}}\big)$ is a continuous function on $\R$.

 Moreover, for every $x\in\R$ there exists $\overline{k}\in\Z$  such that $\overline{k}\leq x<\overline{k}+1$. Hence, by \eqref{atau} and by \eqref{admissibility} applied with $l=1$
\begin{align}\label{overlinek} \nonumber |P\left((a_{k+\tau})_{k,\tau}\right)(x)\rho(x)|=\left|\rho(x)\sum_{n=0}^\infty\frac{1}{2^n}\left( \sum_{k\in\Z, \tau\in D_n}\frac{a_{k+\tau}}{\rho(k)}\varphi_{k+\tau}(x)\right)\right|= \\
=\left| \rho(\overline k) \left( \sum_{k\in\Z}\frac{a_{k}}{\rho(k)}\varphi_{k}(x)\right) + 
\rho(\overline k) \sum_{n=1}^\infty\frac{1}{2^n}\left( \sum_{k\in\Z, \tau\in D_n}\frac{a_{k+\tau}}{\rho(k)}\varphi_{k+\tau}(x)\right)\right|\leq\nonumber\\
\leq \left| a_{\overline k} \varphi_{\overline k}(x)+ \frac{\rho(\overline k)}{\rho(\overline k+1)} a_{\overline k +1}\varphi_{\overline k +1}(x) + \rho(\overline k)\sum_{n=1}^\infty\frac{1}{2^n}\left( \sum_{ \tau\in D_n}\frac{a_{\overline{k}+\tau}}{\rho(\overline{k})}\varphi_{\overline{k}+\tau}(x)\right)\right|\leq \nonumber\\
\leq\frac{\rho(\overline k)}{\rho(\overline k+1)} \left|a_{\overline k +1}\right| + \left|\sum_{n=0}^\infty\frac{1}{2^n}\left(\sum_{ \tau\in D_n}a_{\overline{k}+\tau}\varphi_{\overline{k}+\tau}(x)\right)\right|\leq \nonumber\\
\leq B\left| a_{\overline k +1}\right| + 2\sum_{n=0}^\infty\frac{1}{2^n}\sup_{\tau\in D_n}|a_{\overline{k}+\tau}|. \end{align}

Thus, if $\varepsilon>0$ and  $N\in\N$  is such that  $|a_{k+\tau}|<\varepsilon$ if $k+\tau\notin W_N$, then 
  for any $x\in\R$ such that $|x|>N+1$,  it holds that   $|\overline k|>N$ and  $|\overline k+1|>N$, thus
  $\overline k+\tau \notin W_N$ for every $\tau\in\widetilde D$ and $\overline k +1\notin W_N$. Therefore $|a_{\overline k+\tau}|<\varepsilon$ and $|a_{\overline k +1}| <\varepsilon$.
      Hence 
\begin{align*}& |P\left((a_{k+\tau})_{k,\tau}\right)(x)\rho(x)|<(B+4)\varepsilon.\end{align*}
On the other hand, \eqref{overlinek} implies that 
$$\|P\left((a_{k+\tau})_{k,\tau}\right) \|_{\infty}^\rho\leq (B+4)\|(a_{k+\tau})_{k,\tau}\|,$$
 hence $P:c_0(\mathbb{Z}+\widetilde D) \rightarrow C_0^\rho(\R)$ is well defined,  clearly linear and continuous.

  Observe that clearly
  $$\{\varphi_{k+\tau}, k\in\Z, \tau\in\widetilde{D}\}\subseteq P(c_0(\Z+\widetilde{D})).$$
 If $f\in C_0^\rho(\R)$, given $\varepsilon>0$, there exists $g\in C(\R)$ with compact support such that  $\|f-g\|_{\infty}^\rho<\frac{\varepsilon}{2}$. Let us assume that ${\rm supp} (g)\subseteq [-M-1,M+1]$, $M\in\N$ and consider $\varphi\in {\rm span}\{\varphi_{k+\tau}, k\in\Z, \tau\in\widetilde{D}\}\subseteq P(c_0(\Z+\widetilde{D}))$, $ {\rm supp}(\varphi)\subseteq [-M-1,M+1]$,  such that $\|g-\varphi\|_\infty<\frac{\varepsilon}{2\max_{x\in[-M-1,M+1]}\rho(x)}$. Then
  \[\|f-\varphi\|_{\infty}^\rho<\|f-g\|_{\infty}^\rho+\|g-\varphi\|_{\infty}^\rho<\frac{\varepsilon}{2}+\sup_{x\in[-M-1,M+1]}|g(x)-\varphi(x)|\rho(x)<\varepsilon\] and therefore $P$ has dense range.\\

 Finally  we observe that 
 \[P\circ B_w ((a_{k+\tau})_{k,\tau})=\sum_{n=0}^\infty\frac{1}{2^n}\left(\sum_{k\in\Z, \tau\in D_n}\frac{a_{k+\tau+1}}{\rho(k+1)}\varphi_{k+\tau}\right).\]
  On the other hand,  for every $x\in\R$,  
  \begin{align*}& T_1\circ P((a_{k+\tau})_{k,\tau})(x)=\sum_{n=0}^\infty\frac{1}{2^n}\left(\sum_{k\in\Z, \tau\in D_n}\frac{a_{k+\tau}}{\rho(k)}\varphi_{k+\tau}(x+1)\right)=\\
  &\sum_{n=0}^\infty\frac{1}{2^n}\left(\sum_{k\in\Z, \tau\in D_n}\frac{a_{k+\tau}}{\rho(k)}\varphi_{k+\tau-1}(x)\right)=
  \sum_{n=0}^\infty\frac{1}{2^n}\left(\sum_{h\in\Z, \tau\in D_n}\frac{a_{h+\tau+1}}{\rho(h+1)}\varphi_{h+\tau}(x)\right).
  \end{align*} Then  $P\circ B_w=T_1\circ P$.

\end{proof}

\begin{thm}\label{c0fh}
Let $\mathcal T$ be the translation semigroup on $C_0^\rho(\mathbb{R})$, where $\rho$ is an admissible weight function and $\sup_{k\in\mathbb{Z}}\mfrac{\rho(k+1)}{\rho(k)}<\infty$. Then
$\mathcal T$ is frequently hypercyclic on $C_0^\rho(\mathbb{R})$ if and only if 
there exist a sequence $(M(p))_{p\in\N}$ of positive real numbers tending to $\infty$ and a sequence $(E_p)_{p\in\N}$ of subsets of $\N$ such that:

\begin{aitemize}{\rm(a)}
\item[\rm(a)] for any $p\geq 1$, $\ldens(E_p)>0$,
\item[\rm(b)] for any distinct $p,q\geq 1$,  $(E_p+\llbracket -p, p\rrbracket)\cap (E_q+\llbracket -q, q\rrbracket)=\emptyset$,
\item[\rm(c)] for any $p\geq 1$, $\lim_{n\rightarrow\infty,\,n\in E_p}\rho(n)=0$,
\item[\rm(d)] for any $p,q\geq 1$ and any $n\in E_p$ and  $m\in E_q$ with $m\neq n$,\vspace*{-4pt}\end{aitemize}
\begin{equation}
\rho(m-n)\leq \frac{1}{M(p)M(q)}.
\end{equation}
Moreover, one can replace ``there exist a sequence $(M(p))_{p\in\N}$ and a sequence $(E_{p})_{p\in\N}$'' 
by ``for any sequence $(M(p))_{p\in\N}$ there exists a sequence $(E_{p})_{p\in\N}$''.
\end{thm}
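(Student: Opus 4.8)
The plan is to derive Theorem~\ref{c0fh} from Theorem~\ref{pseudoshift} (equivalently, Corollary~\ref{fhc0V}) through a chain of conjugations, and then to rewrite the abstract pseudo-shift conditions in the concrete, integer-indexed form of the statement. All of the dynamical content is already contained in Theorem~\ref{pseudoshift} and Lemma~\ref{conjugation}; the real work here is the translation of conditions.

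First I would normalise the weight. Using \eqref{admissibility} with $l=1$ together with the hypothesis $\sup_{k\in\Z}\rho(k+1)/\rho(k)<\infty$ and the estimate $\rho(k)\le Me^{\omega}\rho(k+1)$ coming from admissibility, one sees that $\rho(x)$ is comparable to $\rho([x])$ uniformly in $x\in\R$. Hence, setting $\widetilde\rho(x):=\rho([x])$, one gets that $\widetilde\rho$ is admissible, $C_0^{\widetilde\rho}(\R)=C_0^{\rho}(\R)$ with an equivalent norm, the two translation semigroups agree, and both the standing hypothesis and conditions (a)--(d) involve only the values of $\rho$ at integers, where $\rho=\widetilde\rho$; so we may assume $\rho(x)=\rho([x])$ throughout. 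Next, $\mathcal T$ is frequently hypercyclic if and only if the single operator $T_1$ is: one direction is the equivalence of \cite{conejero_muller_peris,mangino_peris2011frequently}, and for the other a frequently hypercyclic vector for $T_1$ is one for $\mathcal T$, by strong continuity of $\mathcal T$ and the local boundedness of $t\mapsto\norm{T_t}$. Finally, by Lemma~\ref{conjugation}, $T_1$ and the weighted backward shift $B_w$ on $c_0(\Z+\widetilde D)$ with $w_{k+\tau}=\rho(k)/\rho(k+1)$ are mutually quasiconjugate through continuous dense-range intertwiners, and such maps send frequently hypercyclic vectors to frequently hypercyclic vectors; hence $\mathcal T$ is frequently hypercyclic $\iff$ $T_1$ is $\iff$ $B_w$ is.

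It then remains to apply Corollary~\ref{fhc0V} to $B_w$ --- with $I=\Z+\widetilde D$ (which satisfies $I+\Z\subseteq I$), the exhaustion $W_p=\llbracket-p,p\rrbracket+\widetilde D_p$, and weights $w_{k+\tau}=\rho(k)/\rho(k+1)$ that satisfy \eqref{BB} since $\inf_k w_{k+\tau}=1/\sup_k\rho(k+1)/\rho(k)>0$ and $\sup_k w_{k+\tau}\le Me^{\omega}$ --- and to simplify its conditions. Here the products telescope, $b^n_s=w_sw_{s+1}\cdots w_{s+n-1}=\rho([s])/\rho([s]+n)$, and, since $\rho=\rho\circ[\cdot]$, the dyadic coordinates play no role in (b)--(d). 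One then checks: (a) is unchanged; condition (c) of Corollary~\ref{fhc0V} reads $\lim_{n\in E_p}\rho([s]+n)=0$ for all $[s]\in\llbracket-p,p\rrbracket$, which (the consecutive ratios of $\rho$ being bounded above and below) is the same as $\lim_{n\in E_p}\rho(n)=0$, i.e.\ (c) here; condition (b) of Corollary~\ref{fhc0V}, $(E_p+W_p)\cap(E_q+W_q)=\emptyset$, is the same as (b) here, because adjoining the sets $\widetilde D_p,\widetilde D_q\subseteq[0,1\mathclose{[}$ (each containing $0$) to integer sets neither creates nor destroys an intersection; and condition (d) of Corollary~\ref{fhc0V}, after the telescoping, reads $\rho(k+(m-n))/\rho(k)\le 1/(M(p)M(q))$ for all integers $k$ with $|k|\le q$, whose $k=0$ case is precisely (d) here, and which is equivalent to that $k=0$ case up to rescaling the sequence $(M(p))$ by a factor $L^{2p}$, where $L:=\max\{\sup_k\rho(k+1)/\rho(k),\,1/\inf_k\rho(k+1)/\rho(k)\}<\infty$ bounds $\rho(k+j)/\rho(k)$ for $|k|\le q$. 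Such a $q$-dependent rescaling is harmless because one carries the ``for any sequence $(M(p))$'' formulation along throughout, and the equivalence between the ``there exists'' and the ``for any'' versions in Theorem~\ref{c0fh} follows from the corresponding equivalence in Theorem~\ref{pseudoshift}.

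The hard part is precisely this last step. One must recognise the telescoping $b^n_s=\rho([s])/\rho([s]+n)$, verify that the dyadic coordinates drop out of (b)--(d), pass from the ``fat'' index sets $E_p+W_p$ to the integer sets $E_p+\llbracket-p,p\rrbracket$, and --- most delicately --- use both admissibility and $\sup_k\rho(k+1)/\rho(k)<\infty$ to trade the fattened inequality $\rho(k+(m-n))/\rho(k)\le1/(M(p)M(q))$ for $|k|\le q$ for its $k=0$ case $\rho(m-n)\le1/(M(p)M(q))$, the $q$-dependent constants being swallowed exactly by the flexibility in the choice of $(M(p))$ that is built into the statement.
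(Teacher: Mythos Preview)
Your proposal is correct and follows essentially the same route as the paper: normalise $\rho$ to a step function, reduce the semigroup to $T_1$ via \cite{conejero_muller_peris,mangino_peris2011frequently}, pass to $B_w$ on $c_0(\Z+\widetilde D)$ through the mutual quasiconjugacies of Lemma~\ref{conjugation}, invoke Corollary~\ref{fhc0V} with $W_p=\llbracket-p,p\rrbracket+\widetilde D_p$, telescope, strip the dyadic part by taking integer parts, and absorb the $q$-dependent constants via the ``for any $(M(p))$'' freedom. The only cosmetic slip is that the $k=0$ case of (d1) gives $\rho(m-n)/\rho(0)\le 1/(M(p)M(q))$ rather than literally (d), but this fixed factor $\rho(0)$ is absorbed by the same rescaling you already invoke.
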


\proof
\hskip-3pt Since $\rho$ is an admissible weight function $\sup_{k\in\Z} \mfrac{\rho(k)}{\rho(k+1)}\!<\!\infty,$
 there exists $M>1$ such that for every $k\in\Z$,
\[ 
M^{-1} \leq \frac{\rho(k+1)}{\rho(k)}\leq M.
\]
As a consequence we get for every $ n\in\N$ we get, if $k\in \llbracket0,p\rrbracket$,
\[ 
\rho(k+n)= \frac{\rho(k+n)}{\rho(k-1+n)}\cdots \frac{ \rho(n+1)}{\rho(n)}\cdot \rho(n) \leq M^k \rho(n)
\]
and if $k\in \llbracket-p,0\rrbracket$,
\[ 
\rho(k+n)= \frac{\rho(k+n)}{\rho(k+1+n)}\cdots \frac{ \rho(n-1)}{\rho(n)} \cdot \rho(n) \leq M^{|k|} \rho(n),
\]
thus for every $k\in\llbracket-p,p\rrbracket$, 
\begin{equation}\label{k+n} 
\rho(k+n) \leq M^{|k|} \rho(n).
\end{equation}

On the other hand, by \eqref{admissibility}, there exist constants $0<A<B$ such that for every $x\in [k,k+1]$,
$$
A\rho(k)\leq\rho(x)\leq B\rho(k+1)\leq BM \rho(k).
$$
If we now define $\tilde{\rho}(x)=\rho([x])$ for $x\in\R$, then there exist constants $M_1,M_2>0$ such that
$$
M_1\|f\|_{\infty}^{\tilde{\rho}}\leq\|f\|_{\infty}^{\rho}\leq M_2\|f\|_{\infty}^{\tilde{\rho}},
$$
so $\|\cdot \|_\infty^{\widetilde\rho}$ is an equivalent norm to
$\|\cdot\|_\infty^\rho$.  Therefore, without loss of generality we can
assume in the following that $\rho(x)=\rho([x])$ for every $x\in\R$.

By the results in
\cite{conejero_muller_peris,mangino_peris2011frequently}, it is known
that $\mathcal T$ is a frequently hypercyclic semigroup if and only if
$T_1$ is a frequently hypercyclic operator. Since $T_1$ is quasiconjugate
to the operator $B_w$ defined as in Lemma~\ref{conjugation} and $B_w$ is quasiconjugate to $T_1$, by 
\cite[Proposition 9.4]{grosse-erdmann_peris2011linear} we find that
$\mathcal T$ is frequently hypercyclic if and only if $B_w$ is
frequently hypercyclic, hence, by Corollary~\ref{fhc0V}, if and only
if there exist a sequence 
$(M(p))_{p\in\N}$ of positive real numbers tending to $\infty$ and
a sequence $(E_p)_{p\in\N}$ of subsets of $\N$ (or equivalently, for any $(M(p))_{p\in\N}$ there exists
$(E_p)_{p\in\N}$) such that

\begin{aitemize}{\rm(a1)}
\item[(a1)] for any $p\geq 1$, $\ldens(E_p)>0$,
\item[(b1)] for any distinct $p,q\geq 1$,  $(E_p+W_p)\cap (E_q+W_q)=\emptyset$,
\item[(c1)] for every $p\geq 1$ and every $s\in W_p$, 
$$
\lim_{n\rightarrow\infty,\,n\in E_p}w_{s}\ldots w_{s+n-1}=\lim_{n\to\infty,\, n\in E_p} 
\frac{\rho(s)}{\rho(s+n)}=\infty,
$$
\item[(d1)] for any $p,q\geq 1$, $n\in E_p$ and $m\in E_q$ with $m\neq n$, and $t\in W_q$, 
$$
\frac{w_{m-n+t}\ldots w_{m+t-1}}{w_{t}\ldots w_{t+m-1} }
=\frac{\rho(m-n+t)}{\rho(t)}\leq\frac{1}{M(p)M(q)},
$$
\end{aitemize}
where $W_p=\llbracket -p,p\rrbracket + \widetilde D_p$. 

Clearly (a1)--(d1) imply  (a)--(d)  simply by
observing that $\llbracket-p,p\rrbracket\subseteq W_p$, and in
particular $0\in W_p$, for every $p\in\N$.

\loo-1 Conversely, assume that (a)--(d) hold. Passing to a subsequence of
$(E_p)_{p\in\N}$ if necessary, we can choose $(M(p))_{p\in\N}$ such
that
$$
\lim_{p\to\infty}\frac{ M(p) \cdot \min\{\rho(k): k\in \llbracket-p,p\rrbracket\}}{M^p}=\infty.
$$
If 
\[
( E_p + W_p)\cap( E_q+W_q)\not=\emptyset,
\]
then there exist $n\in E_p$, $s\in \llbracket-p,p\rrbracket$,
$\sigma\in \widetilde D$, $m\in E_q$, $t\in \llbracket-q,
q\rrbracket$ and $\tau \in \widetilde D$, such that
\[ 
n+s+\sigma=m+t+\tau;
\]
taking integer parts yields $n+s=m+t$, hence
$(E_p+\llbracket-p,p\rrbracket)\cap
(E_q+\llbracket-q,q\rrbracket)\break \not=\emptyset$.  Thus $p=q$. So (b1) is
satisfied.

By \eqref{k+n} and (c), we get  $\lim_{n\to\infty, n\in E_p}
\rho(n+k)=0$ for every $k\in\llbracket-p,p\rrbracket$, and (c1)
follows by observing that for every $s\in W_p$ there exists $k\in
\llbracket-p,p\rrbracket$ such that $\rho(n+s)=\rho(n+k)$.

Let $p,q\geq 1$ and $n\in E_p$, $m\in E_q$, $m\not=n$. 
By \eqref{k+n} and (d), 
for every $k\in \llbracket-q,q\rrbracket$,
\begin{align*} 
\frac{\rho(m-n+k)}{\rho(k)}&\leq \frac{\rho(m-n)}{\rho(k)} M^{|k|} \leq \frac{M^q }{\min\{\rho(k): k\in \llbracket-q,q\rrbracket\}M(p)M(q)}\\
&\leq\frac{1}{K(p)K(q)} 
\end{align*}
where 
$$
K(q)=\min\biggl\{ \frac{ M(q)\cdot \min\{\rho(k): k\in \llbracket-q,q\rrbracket\}}{M^q}, M(q)\biggr\}.
$$
We get (d1) by observing that for every $t\in W_q$ there exists
$k\in \llbracket-q,q\rrbracket$ such that
\[
\frac{\rho(m-n+t)}{\rho(t)}=\frac{\rho(m-n+k)}{\rho(k)}.\squ
\]

\begin{rem}\label{precise}
As an immediate consequence,  if $\rho$ is an admissible weight function on $\R$ such that 
$\sup_{k\in\mathbb{Z}}\mfrac{\rho(k+1)}{\rho(k)}<\infty$ and we set $w_k=\break \mfrac{\rho(k)}{\rho(k+1)}$ for $k\in\mathbb{Z}$, then, by the characterization given in \cite[Theorem 9]{bayartruzsa} and by Theorem \ref{c0fh}, 
$B_w$ is frequently hypercyclic on $c_0(\mathbb{Z})$ if and only if the translation semigroup is frequently hypercyclic on $C_0^\rho(\R)$.
\end{rem}

\begin{prop}
If the translation semigroup $\mathcal T$ is mixing $($equivalently chaotic$)$
 on $C_0^\rho(\mathbb{R})$, then it is frequently hypercyclic.
\end{prop}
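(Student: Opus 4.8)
The plan is to reduce, as throughout the paper, to the single operator $T_1$, and then to apply the Frequent Hypercyclicity Criterion with the subspace of compactly supported continuous functions as the dense set and the backward translation as a (generally non-continuous) right inverse.

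Since $\mathcal T$ is chaotic (equivalently, mixing), the characterization of chaos recalled in Section~1 gives $\lim_{x\to\pm\infty}\rho(x)=0$. By \cite{conejero_muller_peris,mangino_peris2011frequently} it then suffices to show that $T_1\colon C_0^\rho(\R)\to C_0^\rho(\R)$, $T_1f(x)=f(x+1)$, is frequently hypercyclic. I would take $X_0$ to be the subspace of continuous functions with compact support, which is dense in $C_0^\rho(\R)$ (truncate $f\in C_0^\rho(\R)$ by continuous cut-offs and use $\lim_{|x|\to\infty}f(x)\rho(x)=0$), and define $S\colon X_0\to X_0$ by $Sf(x)=f(x-1)$. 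This is a well-defined self-map of $X_0$, since it preserves continuity and compactness of supports, and $T_1Sf=f$ for every $f\in X_0$. It then remains to verify that for each $f\in X_0$ the series $\sum_{n\ge0}T_1^nf$ and $\sum_{n\ge0}S^nf$ converge unconditionally in $C_0^\rho(\R)$; the Frequent Hypercyclicity Criterion will then give that $T_1$, and hence $\mathcal T$, is frequently hypercyclic.

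The key computation is the unconditional convergence, which rests on the fact that the integer translates of a fixed compactly supported function have ``almost disjoint'' supports. Fix $f\in X_0$ with $\operatorname{supp}f\subseteq[-R,R]$, $R\in\N$. For a finite set $F\subseteq\N$ and any $x\in\R$, at most $2R+1$ of the numbers $f(x+n)$, $n\in F$, are nonzero, and these terms all vanish once $x>R-\min F$; hence
$$
\Bigl\|\sum_{n\in F}T_1^nf\Bigr\|_\infty^\rho\le (2R+1)\,\|f\|_\infty\,\sup_{x\le R-\min F}\rho(x).
$$
By \eqref{admissibility} $\rho$ is locally bounded, so this supremum is finite, and since $\lim_{x\to-\infty}\rho(x)=0$ it tends to $0$ as $\min F\to\infty$; choosing $F_0=\llbracket0,N\rrbracket$ with $N$ large enough therefore makes $\bigl\|\sum_{n\in F}T_1^nf\bigr\|_\infty^\rho$ as small as we please for every finite $F$ disjoint from $F_0$, which is precisely the unconditional convergence of $\sum_{n\ge0}T_1^nf$. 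The series $\sum_{n\ge0}S^nf$ is treated symmetrically, the analogous bound being $(2R+1)\,\|f\|_\infty\,\sup_{x\ge\min F-R}\rho(x)$, which tends to $0$ because $\lim_{x\to+\infty}\rho(x)=0$.

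The only point requiring care is the invocation of the Frequent Hypercyclicity Criterion: here $S$ is defined merely on the dense subspace $X_0$ and in general does not extend to a bounded operator on $C_0^\rho(\R)$, but this is exactly the generality in which the criterion is available, so no further work is needed. One could equally well argue at the level of the semigroup, taking $S_tf(x)=f(x-t)$ and invoking the continuous Frequent Hypercyclicity Criterion of \cite{mangino_peris2011frequently}: the maps $t\mapsto T_tf$ and $t\mapsto S_tf$ are Bochner integrable over $[0,\infty)$ for $f\in X_0$ by the same almost-disjoint-supports estimate.
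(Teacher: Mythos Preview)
Your proof is correct and follows essentially the same route as the paper: reduce to $T_1$, apply the Frequent Hypercyclicity Criterion with a dense subspace of compactly supported functions and the backward translation $Sf(x)=f(x-1)$, and use the bounded overlap of integer translates together with $\lim_{x\to\pm\infty}\rho(x)=0$ to obtain unconditional convergence of $\sum_n T_1^n f$ and $\sum_n S^n f$. The only cosmetic difference is that the paper takes $X_0=\operatorname{span}\{\varphi_{k+\tau}\}$ (so each basis function has support of width at most $2$, giving a fixed overlap constant), whereas you work with all of $C_c(\R)$ and absorb the support radius into the constant $2R+1$; both choices work equally well.
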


\begin{proof}
As  is proved in \cite{bermudez_et_alt,matsuitakeo}, chaos and mixing are equivalent properties for the translation $C_0$-semigroup on $C_0^{\rho}(\mathbb{R})$, and this happens if and only if $\lim_{x\rightarrow\pm \infty}\rho(x)=0$. 

As  already observed, it is enough to prove that $T_1$
satisfies the Frequent Hypercyclicity Criterion for operators (see
\cite{bonilla_grosse-erdmann2007frequently}).  Let $X_0=\mathop{\rm span} \{
\varphi_{k+\tau}: k\in\Z,\, \tau\in \widetilde D\}$, where
$\varphi_{k+\tau}$ is defined in \eqref{phi}. Every continuous
function on $\R$ with dense support can be approximated in the uniform
norm with elements of~$X_0$, and therefore $X_0$ is dense in
$C_0^\rho(\R)$.  Moreover, if we define $S:X_0\rightarrow X_0$ by
$Sf(x)=f(x-1)$, it is clear that $T_1Sf=f$. 

Let us prove that
$\sum_{n=1}^\infty T_1^nf$ and $\sum_{n=1}^\infty S^n f$ are
unconditionally convergent for all $f\in X_0$.  It is enough to
consider $f=\varphi_{k+\tau}\in X_0$; then $\mathop{\rm supp}(f)\subseteq [a,b]$
with $b-a\leq 2$, and for every $n\in\N$,
\[ 
{\rm supp}(T_1^n(f))\subseteq [a-n, b-n];
\]
thus for every $n,m\in\N$, if supp$(T_1^nf)
\cap \mathop{\rm supp}(T_1^mf)\not=\emptyset$, then it is immediate  that
$|n-m|\leq |b-a|\leq 2$, hence either $m=n$, or  $m=n\pm 1$, or
$m=n\pm 2$. This implies that if $J\subseteq\N$ is finite, then
\[ 
\Bigl\|\sum_{n\in J}T_1^nf\Bigr\|_\infty^\rho \leq 4\sup_{n\in J}\|T_1^nf\|_\infty^\rho.
\]

Let $\varepsilon >0$ and let $M>0$ be such that $\rho(x)<\varepsilon $
for every $|x|>M$. For every finite set $F\subseteq \N\cap
\mathopen{]}M+b,\infty\mathclose{[}$ and every $x\in [a,b]$ and $n\in F$, we have
 $|x-n|=n-x>M$, so
\[ 
\Bigl\|\sum_{n\in F}T_1^nf\Bigr\|_\infty^\rho \leq 4\sup_{x\in\R, \,n\in F} |f(x+n)\rho(x)|
=2\sup_{x\in [a,b],\, n\in F} |f(x)\rho(x-n)|\leq 
2\varepsilon.
\]
The argument for $\sum_{n=1}^\infty S^nf$ is similar.
\end{proof}

\begin{rem}
The converse of the previous proposition does not hold. Indeed, let $(w_k)_{k\in \mathbb{Z}}$ 
be one of the sequences of weights constructed in \cite{bayartruzsa} such that $B_w$ is 
frequently hypercyclic on $c_0(\mathbb{Z})$ and $w_1\cdots w_k=1$ for infinitely many~$k$.
Define $\rho(k)=(w_1\cdots w_k)^{-1}$ if $k\geq 1$, $\rho(k)=w_k w_{k+1}\cdots w_0$ if $k\leq 0$, 
and $\rho(x)=\rho([x])$ for any $x\in\mathbb{R}$. Then
$\sup_{k\in\Z}\mfrac{\rho(k+1)}{\rho(k)}<\infty$, due to the fact that 
$\mfrac{1}{2}\leq w_k\leq 2$ for all $k\in\Z$, as shown in~\cite{bayartruzsa}.
By Remark \ref{precise}  the translation semigroup is frequently 
hypercyclic on $C_0^\rho(\R)$, while clearly it is not mixing, since $\rho(k)=1$ for infinitely many~$k$.
\end{rem}

Finally, set $J=\mathbb{\Z_+}+\widetilde{D}$ and consider the
partition $J=\bigcup_{n\geq 0}J_n$ where $J_0=\{0\}$ and
\begin{equation}\label{basis2} 
J_n=\{h+D_{n-h}: h=0,1,\ldots,n\}\quad\ \text{for}\ n\ge 1.
\end{equation}
Define an order on $J$ assuming that the elements $J_k$ are earlier
than those in $J_{n}$ if $0\leq k<n,$ and within each $J_n$ we
keep the usual order.

The system $(\psi_i)_{i\in J}$, where $\psi_0(x):=\max\{0,1-x\}$ and
$\psi_{k+\tau}= \varphi_{k+\tau}$ if $\tau\not=0$ or $k\not=0$, is
a Schauder basis on $C_0([0,\infty\mathclose{[}).$

Reasoning analogously to the case of translation semigroups in
$C_0^\rho(\R)$, we also get the following characterization of
frequently hypercyclic translation semigroups on
$C_0^\rho([0,\infty\mathclose{[})$:

\begin{thm}\label{c0fhinfty}
Let $\mathcal T$ be the translation semigroup on $C_0^\rho([0,\infty\mathclose{[})$, 
where $\rho$ is an admissible weight function and $\sup_{k\in\mathbb{N}}\mfrac{\rho(k+1)}{\rho(k)}<\infty$.
Then $\mathcal T$ is frequently hypercyclic on $C_0^\rho([0,\infty\mathclose{[})$ if and only if 
there exist a sequence $(M(p))_{p\in\N}$ of positive real numbers tending to $\infty$ 
and a sequence $(E_p)_{p\in\N}$ of subsets of $\N$ such that:

\begin{aitemize}{\rm(a)}
\item[\rm(a)] for any $p\geq 1$, $\ldens(E_p)>0$,
\item[\rm(b)] for any distinct $p,q\geq 1$, $(E_p+ \llbracket 0, p\rrbracket)\cap (E_q+ \llbracket 0,q\rrbracket )=\emptyset$,
\item[\rm(c)] for every $p\geq 1$, $\lim_{n\rightarrow\infty,\,n\in E_p}\rho(n)=0$,
\item[\rm(d)] for any $p,q\geq 1$ and any $n\in E_p$ and $m\in E_q$ with $m> n$,\vspace*{-6pt}\end{aitemize}
\begin{equation}
\rho(m-n)\leq \frac{1}{M(p)M(q)},
\end{equation}
Moreover, one can replace ``there exist a sequence $(M(p))_{p\in\N}$ and a sequence $(E_{p})_{p\in\N}$'' 
by ``for any sequence $(M(p))_{p\in\N}$ there exists a sequence $(E_{p})_{p\in\N}$''.
\end{thm}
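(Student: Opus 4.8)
The plan is to follow the proof of Theorem~\ref{c0fh} essentially line by line, replacing the two-sided data $\Z+\widetilde D$, $(\varphi_i)$, $W_p=\llbracket -p,p\rrbracket+\widetilde D_p$ by the one-sided data $J=\Z_++\widetilde D$, $(\psi_i)_{i\in J}$, $W_p=\llbracket 0,p\rrbracket+\widetilde D_p$. As at the start of the proof of Theorem~\ref{c0fh}, the hypothesis $\sup_{k\in\N}\mfrac{\rho(k+1)}{\rho(k)}<\infty$ and admissibility yield $M>1$ with $M^{-1}\le\mfrac{\rho(k+1)}{\rho(k)}\le M$ for all $k\in\N$, hence $M^{-k}\rho(n)\le\rho(n+k)\le M^{k}\rho(n)$ for all $n\in\N$ and all integers $k\ge 0$; since $\|\cdot\|_\infty^{\widetilde\rho}$ with $\widetilde\rho(x)=\rho([x])$ is an equivalent norm, we may assume $\rho(x)=\rho([x])$. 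By \cite{conejero_muller_peris,mangino_peris2011frequently}, $\mathcal T$ is frequently hypercyclic on $C_0^\rho([0,\infty\mathclose{[})$ if and only if the operator $T_1$ is.

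The key preliminary is the half-line analogue of Lemma~\ref{conjugation}: $T_1$ on $C_0^\rho([0,\infty\mathclose{[})$ is quasiconjugate to the weighted backward shift $B_w$ on $c_0(J)$ given by $B_w[(x_{k+\tau})_{k+\tau\in J}]=(w_{k+\tau}x_{(k+1)+\tau})_{k+\tau\in J}$ with $w_{k+\tau}=\mfrac{\rho(k)}{\rho(k+1)}$, and, conversely, $B_w$ is quasiconjugate to $T_1$. The intertwining operators are the transcriptions of those in Lemma~\ref{conjugation}: $Q\colon C_0^\rho([0,\infty\mathclose{[})\to c_0(J)$ sends $f$ to its $\rho$-normalised coefficients with respect to the Schauder basis $(\psi_i)_{i\in J}$, and $P\colon c_0(J)\to C_0^\rho([0,\infty\mathclose{[})$ is defined by the series \eqref{series} with $\varphi$ replaced by $\psi$. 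One verifies, exactly as in Lemma~\ref{conjugation}, that $Q$ and $P$ are well defined, linear and continuous, have dense range, and satisfy $Q\circ T_1=B_w\circ Q$ and $P\circ B_w=T_1\circ P$. The only new feature is the boundary block: on $[0,\infty\mathclose{[}$ one has $T_1\psi_{0+\tau}=0$ for all $\tau\in\widetilde D$ and $T_1\psi_{k+\tau}=\psi_{(k-1)+\tau}$ for all $k\ge 1$ (because $\varphi_0$ and $\psi_0$ agree on $[0,\infty\mathclose{[}$, and each $\varphi_{k+\tau}$ with $k\ge 1$ is supported in $[0,\infty\mathclose{[}$), which is precisely the action on $c_0(J)$ of a weighted backward shift that forgets the ``$0$-block''. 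By \cite[Proposition 9.4]{grosse-erdmann_peris2011linear}, $T_1$ is frequently hypercyclic if and only if $B_w$ is.

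Finally I would apply Corollary~\ref{fhc0V} to $B_w$ with $I=J=\Z_++\widetilde D$, which satisfies $I+\N\subseteq I$ (but not $I+\Z\subseteq I$), so the ``resp.'' clauses of Corollary~\ref{fhc0V} are in force, with $W_p=\llbracket 0,p\rrbracket+\widetilde D_p$ and $b_s^n=w_s\cdots w_{s+n-1}=\mfrac{\rho([s])}{\rho([s]+n)}$. Exactly as in the proof of Theorem~\ref{c0fh}, the conditions delivered by Corollary~\ref{fhc0V} translate into (a)--(d): (a) is unchanged; $(E_p+W_p)\cap(E_q+W_q)=\emptyset$ for $p\ne q$ is equivalent, on taking integer parts, to $(E_p+\llbracket 0,p\rrbracket)\cap(E_q+\llbracket 0,q\rrbracket)=\emptyset$, i.e.\ (b); $\lim_{n\in E_p}\mfrac{\rho([s])}{\rho([s]+n)}=\infty$ for every $s\in W_p$ is equivalent, via $M^{-k}\rho(n)\le\rho(n+k)\le M^{k}\rho(n)$, to $\lim_{n\in E_p}\rho(n)=0$, i.e.\ (c); and in the last condition $\phi_n(s)=\phi_m(t)$ with $s\in I$ forces $[s]=[t]+(m-n)\ge 0$, so $\mfrac{b_s^n}{b_t^m}=\mfrac{\rho([t]+(m-n))}{\rho([t])}$. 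For $m>n$ this is at most $\mfrac{M^{q}\rho(m-n)}{\min\{\rho(j):0\le j\le q\}}$, and the rescaling $M(p)\mapsto K(p)=\min\bigl\{M(p)\min\{\rho(j):0\le j\le p\}/M^{p},\,M(p)\bigr\}$ (with $M(p)\to\infty$ chosen large enough, after passing to a subsequence of each $E_p$, so that $K(p)\to\infty$) converts the bound $\rho(m-n)\le\mfrac{1}{M(p)M(q)}$ of (d) into the required inequality; for $m<n$ the condition is vacuous, because when $p\ne q$ condition (b) forces $n-m>q\ge[t]$ whereas $[s]=[t]-(n-m)$ must be $\ge 0$, and when $p=q$ one first passes, as in the proof of Theorem~\ref{pseudoshift}, to a subsequence of $E_p$ whose consecutive elements differ by more than $p$. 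The passage to the ``for any $(M(p))_p$ there exists $(E_p)_p$'' formulation is carried along this same chain of equivalences, as in Theorem~\ref{c0fh}.

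I expect essentially all of the genuine work to be concentrated in the half-line conjugation lemma --- checking that $Q$ and $P$ take values in $c_0(J)$ and $C_0^\rho([0,\infty\mathclose{[})$ respectively, have dense range, and intertwine $T_1$ with $B_w$. This is a transcription of the (somewhat lengthy) estimates in the proof of Lemma~\ref{conjugation}, needing only extra but routine care at the origin, where the exceptional basis function $\psi_0$ lives. Once that is in hand, the remainder is the mechanical substitution of $\llbracket 0,p\rrbracket$ for $\llbracket -p,p\rrbracket$ in the argument for Theorem~\ref{c0fh}.
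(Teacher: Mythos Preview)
Your proposal is correct and takes exactly the approach the paper intends: the paper gives no proof at all of Theorem~\ref{c0fhinfty}, merely the remark ``reasoning analogously to the case of translation semigroups in $C_0^\rho(\R)$'', and your sketch is a faithful execution of that analogy --- half-line Schauder basis $(\psi_i)_{i\in J}$, half-line conjugation lemma, Corollary~\ref{fhc0V} with the ``resp.'' clause, and the substitution $\llbracket 0,p\rrbracket$ for $\llbracket -p,p\rrbracket$. Your extra care in disposing of the $m<n$ case (vacuous by (b) when $p\ne q$, and handled by thinning each $E_p$ to have gaps $>p$ when $p=q$) is exactly the sort of routine adjustment the paper suppresses.
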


\loo-1 The final part of the paper will be devoted to characterizing frequently
hypercyclic semigroups on $L_p^\rho(\R)$. Also in this case we will
first establish a relation between the discrete and the continuous
cases.  We recall that the relation between the discrete and the continuous
cases for Devaney chaos was studied in \cite{bayart_bermudez} and for
distributional chaos in \cite{barrachina_peris}.  The following lemma
follows immediately from the conjugacy of the backward shift $B$ on
$\ell_p^v=\{(x_k)_{k\in\Z}:\sum_{k\in\Z}|x_k|^pv_k<\infty\}$ and the
weighted backward shift $B_w$ on $\ell_p$ where $
w_k=(\mfrac{v_{k}}{v_{k+1}})^{\mfrac{1}{p}}$ for
$k\in\mathbb{Z}$, and from the characterization of frequently hypercyclic
weighted backward shifts on $\ell_p$ proved in~\cite[Theorem~3]{bayartruzsa}.

\begin{lem}\label{backwardfh}
Let $v=(v_k)_{k\in\mathbb{Z}}$ be a sequence of strictly positive weights such that ${(\mfrac{v_{k}}{v_{k+1}})}_k$ is bounded. Then the backward shift operator $B$ is frequently hypercyclic on $\ell_p^v$ if and only if\/ $\sum_{k\in\Z} v_k<\infty$.
\end{lem}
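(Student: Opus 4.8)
The plan is to derive the lemma from \cite[Theorem~3]{bayartruzsa} by means of an isometric conjugacy between $B$ on $\ell_p^v$ and a weighted backward shift on the unweighted space $\ell_p$, and then to evaluate the resulting summability condition through a telescoping identity. First I would introduce the diagonal map $\Phi\colon\ell_p^v\to\ell_p$ given by $\Phi\big((x_k)_{k\in\Z}\big)=\big(v_k^{1/p}x_k\big)_{k\in\Z}$. It is a surjective linear isometry with inverse $\Phi^{-1}\big((y_k)_k\big)=\big(v_k^{-1/p}y_k\big)_k$, and a direct computation gives
\[
\Phi\, B\, \Phi^{-1}\big((y_k)_k\big)=\big(w_k\,y_{k+1}\big)_k,\qquad w_k:=\Big(\frac{v_k}{v_{k+1}}\Big)^{1/p},
\]
so $\Phi B\Phi^{-1}=B_w$, the weighted backward shift on $\ell_p$ with weight sequence $(w_k)_k$. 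The hypothesis that $(v_k/v_{k+1})_k$ is bounded is precisely boundedness of $(w_k)_k$, which guarantees that $B_w$ (equivalently $B$) is a continuous operator. Since frequent hypercyclicity is preserved under conjugacy by a topological isomorphism, $B$ is frequently hypercyclic on $\ell_p^v$ if and only if $B_w$ is frequently hypercyclic on $\ell_p$.

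Next I would apply \cite[Theorem~3]{bayartruzsa}, which states that a bounded weighted backward shift $B_w$ on $\ell_p$ is frequently hypercyclic if and only if $\sum_{n\in\Z}|c_n|^p<\infty$, where $(c_n)_n$ is the normalized running product of the weights — in one standard form, $c_0=1$, $c_n=(w_0w_1\cdots w_{n-1})^{-1}$ for $n\ge 1$ and $c_{-m}=w_{-m}w_{-m+1}\cdots w_{-1}$ for $m\ge 1$, which is exactly the formal eigenvector of $B_w$ for the eigenvalue~$1$. Substituting $w_k=(v_k/v_{k+1})^{1/p}$, every product telescopes and one finds $c_n=(v_n/v_0)^{1/p}$ for all $n\in\Z$, hence $|c_n|^p=v_n/v_0$ and
\[
\sum_{n\in\Z}|c_n|^p=\frac{1}{v_0}\sum_{n\in\Z}v_n .
\]
Thus the Bayart--Ruzsa criterion holds if and only if $\sum_{k\in\Z}v_k<\infty$, which is the claim.

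The argument is essentially bookkeeping, so I do not expect a genuine obstacle; the only point requiring attention is to match the exact normalization of the running products $c_n$ used in \cite[Theorem~3]{bayartruzsa} (the convention for positive versus negative indices, and whether the condition is phrased as one two-sided series or as two one-sided series) with the telescoping computation. Since rescaling all the $c_n$ by the fixed factor $v_0^{-1/p}$ does not change convergence of $\sum_n|c_n|^p$, any such discrepancy is harmless and the computation still lands on $\sum_k v_k$.
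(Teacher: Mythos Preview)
Your argument is correct and follows exactly the route the paper indicates: conjugate $B$ on $\ell_p^v$ to the weighted shift $B_w$ on $\ell_p$ via the diagonal isometry with $w_k=(v_k/v_{k+1})^{1/p}$, then invoke \cite[Theorem~3]{bayartruzsa} and telescope. The paper states this in one sentence before the lemma without spelling out the details you supply, so your proposal is simply a fleshed-out version of the same proof.
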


\begin{thm}\label{fhp}
\spaceskip.34em plus.14em minus.21em Let $\rho$ be an admissible weight function on $\R$.
If the translation semigroup $\mathcal T$ is frequently hypercyclic on $L_p^\rho(\R)$, then the backward shift operator $B$ is frequently hypercyclic on $\ell_p^v$, where $v_k=\rho(k)$ for all $k\in\mathbb{Z}$.
\end{thm}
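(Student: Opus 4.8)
The plan is to factor the time-one operator $T_1$ of the semigroup $\mathcal T$ over the backward shift $B$ on $\ell_p^v$ by means of an explicit averaging quasiconjugacy, and then to invoke two facts already recorded in the excerpt: $\mathcal T$ is frequently hypercyclic if and only if $T_1$ is \cite{conejero_muller_peris,mangino_peris2011frequently}, and frequent hypercyclicity is inherited by quasiconjugates \cite[Proposition~9.4]{grosse-erdmann_peris2011linear}. This is the $L^p$-counterpart of (one direction of) Lemma~\ref{conjugation}; only one direction of the conjugacy is needed because only the implication ``continuous $\Rightarrow$ discrete'' is asserted.

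First I would introduce the map $Q\colon L_p^\rho(\R)\to\ell_p^v$,
\[
Q(f)=\Bigl(\int_k^{k+1}f(t)\,dt\Bigr)_{k\in\Z}.
\]
To check that $Q$ is well defined and bounded, fix $f\in L_p^\rho(\R)$. Since $\rho>0$ and, by \eqref{admissibility} with $l=1$, there is $A>0$ with $A\rho(k)\le\rho(t)$ for every $k\in\Z$ and $t\in[k,k+1]$, each restriction $f|_{[k,k+1]}$ belongs to $L^p[k,k+1]\subseteq L^1[k,k+1]$, and Jensen's inequality on the unit interval gives $\bigl|\int_k^{k+1}f(t)\,dt\bigr|^{p}\le\int_k^{k+1}|f(t)|^{p}\,dt$. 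Hence
\begin{align*}
\sum_{k\in\Z}\Bigl|\int_k^{k+1}f(t)\,dt\Bigr|^{p}v_k
&\le\sum_{k\in\Z}\rho(k)\int_k^{k+1}|f(t)|^{p}\,dt\\
&\le A^{-1}\sum_{k\in\Z}\int_k^{k+1}|f(t)|^{p}\rho(t)\,dt
=A^{-1}\bigl(\norm{f}^\rho_p\bigr)^{p},
\end{align*}
so $Q$ is a bounded linear operator. Moreover $Q$ has dense range: for each $j\in\Z$ one has $\mathbf 1_{[j,j+1]}\in L_p^\rho(\R)$ (by \eqref{admissibility}, $\rho$ is bounded on $[j,j+1]$) and $Q(\mathbf 1_{[j,j+1]})=e_j$, so the range of $Q$ contains every finitely supported sequence, and these are dense in $\ell_p^v$. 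Finally, translating the variable of integration,
\[
Q(T_1 f)_k=\int_k^{k+1}f(t+1)\,dt=\int_{k+1}^{k+2}f(s)\,ds=Q(f)_{k+1}=\bigl(B\,Q(f)\bigr)_k
\]
for every $k\in\Z$, i.e. $Q\circ T_1=B\circ Q$. Thus $B$ is quasiconjugate to $T_1$.

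Granting this, the theorem follows at once: if $\mathcal T$ is frequently hypercyclic on $L_p^\rho(\R)$ then $T_1$ is a frequently hypercyclic operator, and, $B$ being quasiconjugate to $T_1$, $B$ is frequently hypercyclic on $\ell_p^v$. (Directly: a frequently hypercyclic vector $x$ for $T_1$ yields the frequently hypercyclic vector $Qx$ for $B$, since for every nonempty open $U\subseteq\ell_p^v$ the set $Q^{-1}(U)$ is nonempty and open and $\{n:B^nQx\in U\}\supseteq\{n:T_1^nx\in Q^{-1}(U)\}$, so $\ldens(\{n:B^nQx\in U\})\ge\ldens(\{n:T_1^nx\in Q^{-1}(U)\})>0$.) The only step demanding any care is the boundedness-with-dense-range of $Q$ verified above; note that only the lower bound $A\rho(k)\le\rho(t)$ from \eqref{admissibility} is used, which explains why — unlike in Theorem~\ref{c0fh} — no condition of the type $\sup_k\rho(k+1)/\rho(k)<\infty$ is needed here. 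The reverse implication, which goes through $\sum_k\rho(k)<\infty$ via Lemma~\ref{backwardfh}, is what completes the full characterization in Theorem~\ref{lpfh}.
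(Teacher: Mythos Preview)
Your proof is correct and follows essentially the same route as the paper: both use the averaging operator $Q(f)=(\int_k^{k+1}f)_{k\in\Z}$, bound it via \eqref{admissibility} and Jensen's inequality, and exploit that it intertwines $T_1$ with $B$. The only cosmetic difference is that the paper verifies by hand that $Qf$ is a frequently hypercyclic vector for $B$ (taking a finitely supported $y$, setting $g=\sum y_k\chi_{[k,k+1]}$, and showing $\{n:\|T_1^nf-g\|<A^{1/p}\varepsilon\}\subseteq\{n:\|B^nQf-y\|<\varepsilon\}$), whereas you package the same computation as the quasiconjugacy $Q\circ T_1=B\circ Q$ with dense range and invoke \cite[Proposition~9.4]{grosse-erdmann_peris2011linear}.
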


\begin{proof}
Since $\rho$ is an admissible weight function, by \eqref{admissibility} there exist $A,B\geq 0$ 
such that $A\rho(k)\leq\rho(t)\leq B\rho(k+1)$ for all $t\in[k,k+1]$. If $(T_t)_{t\geq0}$ is frequently hypercyclic, then $T_1$ is frequently hypercyclic \cite{conejero_muller_peris}. Hence there exists $f\in L_p^{\rho}(\R)$ such that for all $g\in L_p^{\rho}(\R)$ and for all $\varepsilon>0$,\vspace{-2pt} 
\[
\ldens(\{n\in\mathbb{N}: \|T_1^nf-g\|<\varepsilon\})>0.\vspace{-2pt}
\]
Since $f\in L_p^{\rho}(\R)$ we have  $|f|{\rho}^{\mfrac{1}{p}}\in L_p([k,k+1])\subseteq L_1([k,k+1])$ for every $k\in\mathbb{Z}$. As $\rho$ is strictly positive and locally bounded below  by \eqref{admissibility}, we infer
 that $f\in L_1([k,k+1])$ for all $k\in\mathbb{Z}$. Therefore we can define 
$x_k=\int_{k}^{k+1}f(t)\,dt$ for all $k\in\mathbb{Z}$. We have\vspace{-4pt} 
\begin{eqnarray*}
\sum_{k\in\Z}|x_k|^p\rho(k)&=&\sum_{k\in\Z}\Big|\int_{k}^{k+1}f(t)\,dt\Big|^p\rho(k)
\leq \sum_{k\in\Z}\int_k^{k+1}|f(t)|^p\rho(k)\,dt\\[-2pt]
&\leq& \frac{1}{A}\sum_{k\in\Z}\int_k^{k+1}|f(t)|^p\rho(t)\,dt=\frac{1}{A}\|f\|^p_p<\infty.
\end{eqnarray*}
So $x=(x_k)_{k\in\Z}\in\ell^p_v$ with $v_k=\rho(k)$. 

Let
\[
y=(0,\ldots,y_{-N},\ldots,y_0,\ldots,y_M,0,\ldots,0)
\]
and let $\varepsilon>0$. Set $ g=\sum_{k=-N}^My_k\chi_{[k,k+1]}\in L_p^\rho(\R).$
We will show that
$$
\{n\in\mathbb{N}:\|T_1^nf-g\|<A^{\mfrac{1}{p}}\varepsilon\}\subseteq\{n\in\mathbb{N}:\|B^nx-y\|<\varepsilon\},
$$
and therefore
$$
\ldens(\{n\in\mathbb{N}: \|B^nx-y\|<\varepsilon\})>0
$$
because $f$ is a frequently hypercyclic vector. We have
\begin{align*}
\|B^nx-y\|^p&=\sum_{k\in\Z}|x_{n+k}-y_k|^p\rho(k)\\
&\leq \frac{1}{A}\sum_{k\in\Z}\int_{k}^{k+1}|f(t+n)-g(t)|^p\rho(t)\,dt\leq \varepsilon^p.
\end{align*}
By the density of finite sequences in $\ell_p^v$ we conclude that $B$ is frequently hypercyclic.
\end{proof}

Finally, we are able to characterize frequently hypercyclic translation semigroups on $L_p^\rho(\R)$.

\begin{thm}\label{lpfh}
Let $\rho$ be an admissible weight function on $\R$. The following assertions are equivalent:

\wc{\rm(2)}
\begin{itemize}
\item[\rm(1)]The translation semigroup $\mathcal{T}$ is frequently hypercyclic on $L_p^\rho(\R)$.
\item[\rm(2)]$\sum_{k\in\Z} \rho(k)<\infty$.\vspace{2pt}
\item[\rm(3)]$\int_{-\infty}^\infty\rho(t)\,dt<\infty$.
\item[\rm(4)] $\mathcal T$ is chaotic on $L_p^\rho(\R)$.
\item[\rm(5)] $\mathcal T$ satisfies the Frequent Hypercyclicity Criterion.
\end{itemize}
\end{thm}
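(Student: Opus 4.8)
The plan is to close the cycle $(1)\Rightarrow(2)\Rightarrow(3)\Rightarrow(4)\Rightarrow(5)\Rightarrow(1)$, most steps of which reduce to results already at hand. The implication $(5)\Rightarrow(1)$ is immediate: producing frequently hypercyclic vectors is exactly the purpose of the Frequent Hypercyclicity Criterion, so I would just invoke \cite{mangino_peris2011frequently}. For $(1)\Rightarrow(2)$, note first that admissibility of $\rho$ gives $\rho(k)\leq Me^{\omega}\rho(k+1)$ for all $k\in\Z$, so the weights $v_k=\rho(k)$ satisfy the boundedness hypothesis $\sup_k (v_k/v_{k+1})<\infty$ of Lemma \ref{backwardfh}. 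If $\mathcal T$ is frequently hypercyclic on $L_p^\rho(\R)$, then Theorem \ref{fhp} shows that the backward shift $B$ is frequently hypercyclic on $\ell_p^v$, and Lemma \ref{backwardfh} then forces $\sum_{k\in\Z}\rho(k)<\infty$, i.e.\ $(2)$.

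The equivalence $(2)\Leftrightarrow(3)$ is an elementary series--integral comparison through \eqref{admissibility}: taking $l=1$ there, one has $A\rho(k)\leq\rho(t)\leq B\rho(k+1)$ for $t\in[k,k+1]$, hence, integrating over $[k,k+1]$ and summing over $k\in\Z$,
\[
A\sum_{k\in\Z}\rho(k)\ \leq\ \int_{-\infty}^{\infty}\rho(t)\,dt\ \leq\ B\sum_{k\in\Z}\rho(k+1)\ =\ B\sum_{k\in\Z}\rho(k),
\]
so the series and the integral converge simultaneously.

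For $(3)\Rightarrow(4)$ I would use the characterization recalled in the Introduction: $\mathcal T$ is chaotic on $L_p^\rho(\R)$ if and only if for all $\varepsilon,l>0$ there is $P>0$ with $\sum_{k\in\Z\setminus\{0\}}\rho(l+kP)<\varepsilon$. Fix $\varepsilon,l>0$. Taking $l=1$ in \eqref{admissibility} yields $\rho(\sigma)\leq\frac{1}{A}\int_{\sigma}^{\sigma+1}\rho(t)\,dt$ for every $\sigma$. Applying this with $\sigma=l+kP$ and noting that, for $P\geq 1$, the intervals $[l+kP,l+kP+1]$ with $k\neq 0$ are pairwise disjoint and contained in $\R\setminus(l-P+1,\,l+P)$, we obtain
\[
\sum_{k\in\Z\setminus\{0\}}\rho(l+kP)\ \leq\ \frac{1}{A}\int_{\R\setminus(l-P+1,\,l+P)}\rho(t)\,dt .
\]
By $(3)$ the right-hand side is the tail of a convergent integral and hence tends to $0$ as $P\to\infty$, so it is $<\varepsilon$ once $P$ is large; thus $\mathcal T$ is chaotic. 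Finally, $(4)\Rightarrow(5)$ is the continuous Frequent Hypercyclicity Criterion of \cite{mangino_peris2011frequently}, which asserts that a chaotic translation semigroup on a weighted space of integrable functions satisfies the Frequent Hypercyclicity Criterion; this closes the cycle.

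The only point I expect to require genuine care is the bookkeeping in $(3)\Rightarrow(4)$: checking that for $P\geq 1$ the translated unit intervals $[l+kP,l+kP+1]$ are pairwise disjoint and sit inside $\R\setminus(l-P+1,l+P)$, so that the chaos series is dominated by an honest tail of $\int_{\R}\rho$. Everything else is a direct appeal to Theorem \ref{fhp}, Lemma \ref{backwardfh}, and \cite{mangino_peris2011frequently}, together with the elementary comparison underlying $(2)\Leftrightarrow(3)$.
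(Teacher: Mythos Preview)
Your proof is correct and follows essentially the same route as the paper: $(1)\Rightarrow(2)$ via Theorem~\ref{fhp} and Lemma~\ref{backwardfh}, $(2)\Leftrightarrow(3)$ by the series--integral comparison from \eqref{admissibility}, and $(5)\Rightarrow(1)$ by \cite{mangino_peris2011frequently}. The only minor difference is that you prove $(3)\Rightarrow(4)$ explicitly via the chaos characterization recalled in the Introduction and then invoke $(4)\Rightarrow(5)$, whereas the paper cites \cite[Proposition~3.4]{mangino_peris2011frequently} for $(3)\Rightarrow(5)$ directly (and separately notes $(4)\Leftrightarrow(5)$ via \cite[Proposition~3.3]{mangino_peris2011frequently}); both routes close the cycle equally well.
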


\begin{proof}
Observe that $(\mfrac{\rho(k)}{\rho(k+1)})_{k\in\Z}$ is bounded by the admissibility of~$\rho$.
By Theorem \ref{fhp} and Lemma \ref{backwardfh}, we have (1)$\Rightarrow$(2). The equivalence of $(2)$ and $(3)$ follows from the properties of $\rho$, by comparing integrals and series.
The equivalence of $(4)$ and $(5)$ can be proved with the same argument as in \cite[Proposition 3.3]{mangino_peris2011frequently}. (5)$\Rightarrow$(1) is proved in \cite[Theorem 2.2]{mangino_peris2011frequently}, while (3)$\Rightarrow$(5) can be proved as in \cite[Proposition 3.4]{mangino_peris2011frequently}.
\end{proof}

With minor changes we also get a characterization of frequently
hypercyclic translation semigroups on $L_p^\rho([0,\infty\mathclose{[})$:

\begin{thm}
Let $\rho$ be an admissible weight function on $[0,\infty\mathclose{[}$. 
The following assertions are equivalent:

\wc{\rm(2)}
\begin{itemize}
\item[\rm(1)]The translation semigroup $\mathcal{T}$ is frequently hypercyclic on $L_p^\rho([0,\infty\mathclose{[})$.
\item[\rm(2)]$\sum_{k\in\N} \rho(k)<\infty$.\vspace{2pt}
\item[\rm(3)]$\int_0^\infty\rho(t)\,dt<\infty$.\vspace{2pt}
\item[\rm(4)] $\mathcal T$ is chaotic on $L_p^\rho([0,\infty\mathclose{[})$.
\item[\rm(5)] $\mathcal T$ satisfies the Frequent Hypercyclicity Criterion.
\end{itemize}
\end{thm}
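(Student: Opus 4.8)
The plan is to follow, almost verbatim, the proof of Theorem~\ref{lpfh}, replacing $\R$ by $[0,\infty\mathclose{[}$ throughout, the bilateral weighted sequence spaces by their unilateral counterparts, and sums over $\Z$ by sums over $\N$. Concretely, one proves the cycle $(1)\Rightarrow(2)\Leftrightarrow(3)\Rightarrow(5)\Rightarrow(1)$ together with $(4)\Leftrightarrow(5)$. Note first that $(\rho(k)/\rho(k+1))_{k\in\N}$ is bounded by the admissibility of $\rho$. The equivalence $(2)\Leftrightarrow(3)$ is obtained exactly as in the real-line case, by comparing the series $\sum_{k\in\N}\rho(k)$ with the integral $\int_0^\infty\rho(t)\,dt$: inequality \eqref{admissibility} with $l=1$ yields $A\rho(k)\le\rho(t)\le B\rho(k+1)$ for $t\in[k,k+1]$, which is all the comparison requires. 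The implications $(4)\Leftrightarrow(5)$, $(5)\Rightarrow(1)$ and $(3)\Rightarrow(5)$ carry over unchanged from \cite[Proposition~3.3]{mangino_peris2011frequently}, \cite[Theorem~2.2]{mangino_peris2011frequently} and \cite[Proposition~3.4]{mangino_peris2011frequently} respectively, since none of those arguments uses that the underlying interval is all of $\R$ rather than a half-line.

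Thus the only implication requiring a genuinely new (though entirely parallel) argument is $(1)\Rightarrow(2)$. For this I would first establish the half-line analogue of Theorem~\ref{fhp}: \emph{if $\mathcal T$ is frequently hypercyclic on $L_p^\rho([0,\infty\mathclose{[})$, then the unilateral backward shift $B$ is frequently hypercyclic on the space $\ell_p^v$ of $\N$-indexed sequences with weights $v_k=\rho(k)$.} The proof is a word-for-word transcription of that of Theorem~\ref{fhp}: by \cite{conejero_muller_peris} the operator $T_1$ is frequently hypercyclic, so it admits a frequently hypercyclic vector $f\in L_p^\rho([0,\infty\mathclose{[})$; setting $x_k=\int_k^{k+1}f(t)\,dt$ for $k\in\N$, one checks, using $A\rho(k)\le\rho(t)$ on $[k,k+1]$ together with Jensen's inequality, that $x=(x_k)_{k\in\N}\in\ell_p^v$, and that for every finitely supported $y=(y_0,\dots,y_M,0,\dots)$ and the associated step function $g=\sum_{k=0}^{M}y_k\chi_{[k,k+1]}$ one has the inclusion $\{n\in\N:\|T_1^nf-g\|<A^{1/p}\varepsilon\}\subseteq\{n\in\N:\|B^nx-y\|<\varepsilon\}$; since finitely supported sequences are dense in $\ell_p^v$, this makes $x$ a frequently hypercyclic vector for $B$.

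It then remains to record the half-line analogue of Lemma~\ref{backwardfh}: for a sequence $v=(v_k)_{k\in\N}$ of strictly positive weights with $(v_k/v_{k+1})_k$ bounded, the unilateral backward shift on $\ell_p^v$ is frequently hypercyclic if and only if $\sum_{k\in\N}v_k<\infty$. As in Lemma~\ref{backwardfh}, $B$ on $\ell_p^v$ is conjugate to the unilateral weighted backward shift $B_w$ on $\ell_p$ with $w_k=(v_k/v_{k+1})^{1/p}$; here $w_1\cdots w_n=(v_1/v_{n+1})^{1/p}$, so $(w_1\cdots w_n)^{-p}=v_{n+1}/v_1$, and invoking the characterization of frequently hypercyclic unilateral weighted backward shifts on $\ell_p$ (for which frequent hypercyclicity is equivalent to chaos, i.e.\ to $\sum_n(w_1\cdots w_n)^{-p}<\infty$; see \cite{bonilla_grosse-erdmann2007frequently,bayartruzsa}) one obtains precisely $\sum_{k\in\N}v_k<\infty$. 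Combined with the previous paragraph and $v_k=\rho(k)$, this gives $(1)\Rightarrow(2)$ and closes the cycle. The only point needing attention in the whole argument is exactly this one — to use the \emph{unilateral} form of the weighted-shift characterization rather than Lemma~\ref{backwardfh} itself, since $L_p^\rho([0,\infty\mathclose{[})$ produces one-sided rather than two-sided sequences; everything else is a routine adaptation of the proofs already given for $L_p^\rho(\R)$.
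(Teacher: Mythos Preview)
Your proposal is correct and follows exactly the approach the paper intends: the authors give no proof for this theorem beyond the remark ``with minor changes'' from the $\R$-case, and you have spelled out precisely those changes --- passing to unilateral sequence spaces, using the half-line analogues of Theorem~\ref{fhp} and Lemma~\ref{backwardfh}, and invoking the unilateral form of the Bayart--Ruzsa characterization (where frequent hypercyclicity of $B_w$ on $\ell_p$ is equivalent to $\sum_n(w_1\cdots w_n)^{-p}<\infty$). The one delicate point you flag, namely that one must use the \emph{unilateral} weighted-shift result rather than Lemma~\ref{backwardfh} itself, is exactly the ``minor change'' the paper has in mind.
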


{\bf Acknowledgements.}\hods
The authors thank the anonymous referee for the critical reading of the manuscript and for his constructive comments.

This work is supported by MEC and FEDERER, proyect MTM 2013-47093-P.
The second author also acknowledges the support of a grant by the FPU
Program of Ministerio de Educaci\'on and a travel grant from the
Foundation Ferran Sunyer i Balaguer, and she thanks the Dipartimento di
Matematica e Fisica ``E. De Giorgi'' (Lecce, Italy) for the hospitality
during her stay.

\end{document}